\newtheorem{conj}{Conjecture}
\newtheorem{Aff}{Affirmation}
\numberwithin{equation}{section}
\definecolor{zzttff}{rgb}{0.6,0.2,1.}
\definecolor{qqwuqq}{rgb}{0.,0.39215686274509803,0.}
\begin{document}

\title{The Inverse Problem for Nested Polygonal Relative Equilibria}
%



\author{Marcelo P. Santos
}


\institute{Universidade Federal Rural de Pernambuco \\
     Depto de Matem\'atica\\
     Rua Dom Manoel de Medeiros, S/N,
      Dois Irm\~aos
     Recife, PE\\
     52171-900, Brasil\\
     Phone: 3320-6482, 
              \email{marcelo.pedrosantos@ufrpe.br}
              ORCID ID: orcid.org/0000-0002-9023-0728}           

\date{Received: date / Accepted: date}

\maketitle

\begin{abstract}
We prove that for some potentials (including the Newtonian one, and the potential of Helmholtz vortices in the plane) relative equilibria consisting of two homothetic regular polygons of arbitrary size can only occur if the masses at each polygon are equal. The same result is true for many ragular polygons as long as the ratio between the radii of the polygons are sufficient large. Moreover, under these hypotheses, the relative equilibrium always exist.
\keywords{Celestial Mechanics \and N-Body Problem \and N-Vortex Problem \and Central Configurations\and Relative Equilibrium\and Polygonal Central Configuration}
\subclass{70F10,70F15,70F17,70Fxx,37N05.}
\end{abstract}

\section{Introduction}
\label{sec:1}
The N-body Problem describe the dynamics of point masses under the action of gravitational law of attraction. Let $m_i$  represent point masses at positions $q_i \in \mathbb{R}^d$. The equations of motion are:
\begin{equation*}
m_i\ddot q_i=\left(\frac{\partial U_a}{\partial q_i}\right)^t,\quad i=1,\ldots, N,
\end{equation*}\\
where $t$ denotes transpose and
\begin{align}\label{PotencialQualquer}
U_a(q_1, \ldots, q_N)=\left\lbrace\begin{array}{c}
\frac{1}{a-2}\displaystyle\sum_{j<i} \frac{m_im_j}{|q_i-q_j|^{a-2}} \quad \mbox{ if } a > 2 ,\\
\displaystyle\sum_{j<i} m_im_j  \log |q_i-q_j| \quad \mbox{ if } a=2.\end{array}\right.
\end{align}

When $a=3$ we have the Newtonian case. This case is hard enough to prevent complete solutions if $N>2$, in fact, there is just one kind of solutions known explicitly: the homographic solutions, i.e., solutions whose shape is preserved along the motion up to scaling. In a homographic solution the initial conditions must satisfy the algebraic equations in the following definition.

\begin{definition}
A configuration $q=(q_1,\ldots,q_N)\in\mathbb{R}^{dN}$ is a {\bf central configuration } if there exists a constant $\gamma \in \mathbb{R}$ such that
\begin{equation}
\gamma(q_i-{\bf q_G})=\sum_{j \neq i} m_j\frac{q_j-q_i}{|q_j-q_i|^a} \quad i=1,\ldots, N,\label{EquacaoDeConfiguracaoCentral}
\end{equation}
where $ {\bf q_G}= \displaystyle \frac{\sum_{j=1}^{N} m_jq_j}{M}$ is the center of mass, and  $ \displaystyle M=\sum_{j=1}^{N} m_j \neq 0,$ is the total mass. 
\end{definition}

Central configurations have been of great interest over the last decades due their importance in Celestial Mechanics, for a good introduction see \cite{18,22}. 

  From now on we make the assumption $d=2$. In this case the central configuration is also called {\bf relative equilibrium} since in a rotating system of coordinates with angular velocity $\nu=\sqrt{-\gamma}$, a central configuration gives rise to an equilibrium solution of the N-body Problem.
  
   The Helmholtz's \cite{Helm} equations for the dynamics of $N$ point vertices in a planar
incompressible fluid with zero viscosity are given by:
\begin{equation}
\label{equacaoDeHelmholtz}
\dot q_i=\sqrt{-1}\cdot\sum_{j \neq i} m_j\frac{q_i-q_j}{|q_i-q_j|^2}.
\end{equation}

Here $m_j$ represents the vorticities which may either be positive or negative. 
 A relative equilibrium, in this case, corresponds to a periodic solution where the distances between vortices remain constant. Such equilibrium is a solution of equation \eqref{EquacaoDeConfiguracaoCentral} with $a=2$. For a deeper discussion of $N$-Vortex Problem we refer the reader to \cite{75}.
 
Among the most important problems in Celestial Mechanics, there is a problem of given positives masses finding all positions that gives rise to a central configuration (see for instance: \cite{1,74}). We describe the inverse problem by fix positions and find the masses which make it into a central configuration.

The aim of this paper is to study this problem when the positions are nested regular polygons, and the potential is like in  \eqref{PotencialQualquer}. In the remainder of this paper we assume that $N\geq 3$ and the polygons have no twisted angle.\\

It is easy to check that when the $N$ equal masses are located at
the vertices of a regular polygon, they form a relative equilibrium for a suitable choice of angular velocity. A relative equilibrium with positions in the  vertices of a regular polygon, with $N>3$, can only occur if the masses (vorticities) are equal, as shown in \cite{14,13} in the Newtonian case and \cite{10} in the $N$-Vortex case.

In \cite{8} this problem is discussed for two polygons, but  without a detailed proof. In \cite{21,2015} is shown in the Newtonian case that, for two polygons, a necessary condition for the existence of a relative equilibrium is that masses must be equal in each polygon. 

In this work we extend the previous result to the potentials \eqref{PotencialQualquer}, and we show that the conclusions remain true for such potentials when we have more polygons since the rate between their sizes is sufficient large.

 We cannot find in the literature any work on the inverse problem for more than two polygons in the Newtonian case. Or more than one polygon in the $N$-Vortex case. Here we examine theses cases.

\section{Relative Equilibrium of Nested Polygons}
\label{sec:2}
Consider $L$ regular concentric polygons where the angle between
 the polygons is zero. Consider $LN$ punctual masses $m_1,\ldots,m_{LN}$ at their vertices.

Let us assume that polygons are inscribed in circumferences of radii
$r_1, \ldots,r_L$ where $r_i \in \mathbb{R^+}$ and $ r_i \neq r_j$ if $i \neq j$ (in this way we will refer to the radius of the circumference which the polygon is inscribed).

To shorten notation, we write $I_k=\{1,2,3,\ldots,k\}.$ Enumerate the masses in a way that the first $N$ masses are at the polygon of radius $r_1$, the masses $m_{N+1}, \ldots,m_{2N}$ are at the polygon of radius $r_2$ and so on.

This configuration is a relative equilibrium, with angular velocity $\nu$ if and only if the following equation holds:
\begin{equation}
\nu^2(q_i-{\bf q_G})=\sum_{j=1\atop{j \neq i}}^{LN} m_j\frac{q_i-q_j}{|q_i-q_j|^a}, \quad \quad i\in I_{LN}.
\label{EQUACAODECONFIGURACAOCENTRAL}
\end{equation}

By identifying $\mathbb{R}^2\simeq \mathbb{C},$ we can write $q_{(j-1)N+k}=r_j\omega_k$
where $\omega_k=e^{\theta k \sqrt{-1}}$, with $\theta=\frac{2\pi}{N}$ and $j\in I_L ,k\in I_N$.
In this case, writing the equation for the body at {\it k-th} vertice of the {\it T-th} polygon , and indexing the polygons by $S$, equation \eqref{EQUACAODECONFIGURACAOCENTRAL} becomes 
 \begin{align*}
 \label{ggf}
 \nu^2(r_T\omega_k-{\bf q_G})=\sum_{S=1}^{L}\sum_{j=1\atop{j \neq k \,\text{\tiny if}\, S=T}}^{N} m_{(S-1)N+j} \frac{r_T\omega_k-r_S\omega_j}{|r_T\omega_k-r_S\omega_j|^{a}},
\end{align*}
for $T\in I_L, k=(T-1)N+1,\ldots, TN$.

Multiplying the {\it k-th} equation by $\omega_{-k}=e^{-\theta k \sqrt{-1}},$ and recalling the expression for ${\bf q_G}$, after some manipulations we get the equivalent equation
\begin{equation}
\label{2}
\nu^2r_T=\sum_{S=1}^{L}\left(\sum_{j=1\atop{j \neq k \,\text{\tiny if}\, S=T}}^{N} m_{(S-1)N+j} \frac{r_T-r_S\omega_{j-k}}{|r_T-r_S\omega_{j-k}|^{a}}+\frac{\nu^2}{M}\sum_{j=1}^{N} m_{(S-1)N+j} r_S\omega_{j-k}\right).
\end{equation}

 Now we will define the matrices $ A_ {TS}=[a_{kj}]$, that represents the interaction
 between the bodies present in the polygons indexed by $T$ and $S$ respectively.
  More precisely, $a_{kj}$ express the interaction 
 between the {\it k-th} body at {\it T-th} polygon and {\it j-th} body at {\it S-th} polygon.

Consider the $L^2$ matrices of order $N\times N$, $A_{TS}=[a_{kj}]$ , where, for $k,j\in I_{N}$, we let 

\begin{equation}
  a_{kj}=\left\{\begin{array}{cr}
  \dfrac{\nu^2}{M}r_T ,&\text{ if } T=S \text{ and } k=j,\\
  \dfrac{r_T-r_S\omega_{j-k}}{|r_T-r_S\omega_{j-k}|^{a}}+\frac{\nu^2}{M}r_S\omega_{j-k}, &\text{ otherwise.}  \\
    \end{array}\right.\label{MatrizATS}
\end{equation}
Equations (\ref{2}) become \begin{equation}
 \left[\begin{array}{cccc}
  A_{11}&A_{12}&\ldots&A_{1L}\\
  A_{21}&A_{22}&\ldots&A_{2L}\\
  \vdots&\vdots&\ddots&\vdots\\
  A_{L1}&A_{L2}&\ldots&A_{LL}\\
  \end{array}\right]
   \left[\begin{array}{c}
  \mathbf m_{1}\\
  \mathbf m_{2}\\
    \vdots\\
 \mathbf m_{L}
  \end{array}\right]=\left[\begin{array}{c}
  \nu^2r_1 \mathbf{1}\\
  \nu^2r_2\mathbf{1}\\
    \vdots\\
 \nu^2r_L\mathbf{1}\\
  \end{array}\right]\label{EquacaoMatricialParaOProblema},
  \end{equation}
where $\mathbf{m}_T=\left(m_{(T-1)N+1},\ldots, m_{TN}\right)^t,
\mathbf{1}= \left(1,\ldots,1\right)^t \in \mathbb{R}^N$ and $t$ denotes the transpose. 

So the aim is to find the values of $\nu$ and masses such that system \eqref{EquacaoMatricialParaOProblema} has solution.

The matrices $A_{TS}$ are  simultaneously diagonalizable (see section \ref{sec:3}) by a basis of eigenvectors $\{\mathbf{v}_p\}_{p\in I_N}$, such that $\mathbf{v}_N=\mathbf{1}$. 

If we set $\lambda_p(A_{TS})$ the eigenvalue associated to $\mathbf{v_p}$, and $\otimes$ denotes the Kronecker product of matrices, and $\{\mathbf{e}_j\}_{j \in I_L}$ the canonical basis of $\mathbb{C}^L$. Recalling that $\{\mathbf{v}_p\}_{p\in I_N}$  is a basis of $\mathbb{C}^N$, we see that the vector 
\begin{equation}
\label{VetorDeMassas}
\mathbf{m}=(\mathbf{m}_1,\ldots,\mathbf{m}_L)=\sum_{p=1}^{N} (x_1^p\mathbf{e}_1\otimes\mathbf{v}_p+\ldots+x_L^p\mathbf{e}_L\otimes\mathbf{v}_p),
\end{equation}

 is a solution of system \eqref{EquacaoMatricialParaOProblema}
if, and only if, the coefficients $x_i^p$, for $p\in I_{N-1},i\in I_{L}$,  satisfy the subsystem:
\begin{equation}
\left\{\begin{array}{ccc}
x_1^p\lambda_p(A_{11})+x_2^p\lambda_p(A_{12})+\ldots+x_L^p\lambda_p(A_{1L})&=&0,\\
x_1^p\lambda_p(A_{21})+x_2^p\lambda_p(A_{22})+\ldots+x_L^p\lambda_p(A_{2L})&=&0,\\
\vdots&\vdots& \\
x_1^p\lambda_p(A_{L1})+x_2^p\lambda_p(A_{L2})+\ldots+x_L^p\lambda_p(A_{LL})&=&0,
\end{array}\right.\label{SubsistemaP}
\end{equation}
and for $p=N,$ the subsystem:
\begin{equation}
\left\{\begin{array}{ccc}
x_1^N\lambda_N(A_{11})+x_2^N\lambda_N(A_{12})+\ldots+x_L^N\lambda_N(A_{1L})&=&\nu^2r_1,\\
x_1^N\lambda_N(A_{21})+x_2^N\lambda_N(A_{22})+\ldots+x_L^N\lambda_N(A_{2L})&=&\nu^2r_2,\\
\vdots&\vdots& \\
x_1^N\lambda_N(A_{L1})+x_2^N\lambda_N(A_{L2})+\ldots+x_L^N\lambda_N(A_{LL})&=&\nu^2r_L.
\end{array}\right.\label{SubsistemaN}
\end{equation}

Since the coefficients $x_i^p$ determine the mass vector  $\mathbf{m}$, we can reformulate the inverse problem by considering the $x_i^p$ and $\nu$ as the unknowns once the positions and the numbers $\lambda_p(A_{TS})$ are given.

We will show under certain hypothesis that the determinant $\det[\lambda_p(A_{TS})],\, T,S \in I_L$ has to be different from zero, so the corresponding coefficients $x_i^p$ are zero. Also we show that $p=N$, $\det[\lambda_N(A_{TS})],T,S\in I_{L}$ is non-zero. This establishes the existence of masses such that  equations \eqref{EquacaoMatricialParaOProblema} hold. Furthermore, if $\nu$ is real, this implies that the masses must be real too.

\section{Circulant Matrices and The Determinant Expression}
\label{sec:3}
\begin{definition}
A matrix $C$ of order $N\times N,$ is circulant if $c_{i-1,j-1}=c_{i,j} $ for all $i,j\in I_N$,
where we identify $c_{0,j}$ with $c_{N,j}$,  and $c_{i,0}$ with $c_{i,N}.$
\end{definition}
It follows immediately that matrices defined in \eqref{MatrizATS} are circulant.

\begin{lemma}
\label{LemaOfCiculantsMatrices}
Any circulant matrix $C=[c_{ij}]$, is given by the polynomial\\
$C=\displaystyle \sum_{j=1}^{N} c_{1,j}W^{N-j+1}$, where $\displaystyle W \in M_{N\times N}(\mathbb{C})$
is the circulant matrix:
\begin{equation*}
W=
\left(\begin{array}{ccccc}
0&0&\ldots&0&1\\
1&0&\ldots&0&0\\
\vdots&\ddots&\ddots&\vdots&\vdots\\
\vdots&\vdots&\ddots&\ddots&\vdots\\
0&\ldots&0&1&0
\end{array}\right).
\end{equation*}
The set of vectors $\{\mathbf{v}_p\}_{p\in I_{N}}$, with $\mathbf{v}_p=\left(1,\omega_p,\cdots,\omega_p^{N-1}\right)^t$,with $\omega_p=e^{\theta p\sqrt{-1}},\theta=\frac{2\pi}{N}$,  is a eigenvectors basis for $W$, and the eigenvalue
associated to $\mathbf{v}_p$ is $\lambda_p(W)=\omega_p^{N-1}$. Hence, $C$ is diagonalizable with the same basis and correspondent eigenvalues are  
\begin{equation}
\sum_{j=1}^{N} c_{1,j}(\omega_{p}^{N-1})^{N-j+1}=\sum_{j=1}^{N} c_{1,j}\omega_{p}^{j-1}.
\label{FormulaAutovalorDaMatrizCirculante}
\end{equation}
\end{lemma}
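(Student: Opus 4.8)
The statement is the classical diagonalization theorem for circulant matrices, so the plan is to reduce everything to the single shift matrix $W$ and let its polynomial structure do the rest. I would first record the one bookkeeping fact on which everything rests: $W$ acts as a cyclic shift. Reading off the entries, $(W)_{i,j}=1$ exactly when $i\equiv j+1\pmod N$, so by induction $(W^m)_{i,j}=1$ exactly when $i\equiv j+m\pmod N$. In particular the first row of $W^{N-j+1}$ carries its single nonzero entry in column $j$. This index arithmetic modulo $N$ is where essentially all the risk of error lies; the result itself is elementary, so the ``hard part'' is merely keeping the cyclic indices straight.

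For the representation $C=\sum_{j=1}^{N}c_{1,j}W^{N-j+1}$, I would argue that both sides are circulant matrices — the right-hand side because any polynomial in the circulant matrix $W$ is again circulant — and that a circulant matrix is determined by its first row. By the computation above the first row of the right-hand side has entry $c_{1,j}$ in column $j$, which coincides with the first row of $C$; hence the two matrices are equal.

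Next I would verify directly that each $\mathbf{v}_p=(1,\omega_p,\dots,\omega_p^{N-1})^t$ is an eigenvector of $W$. Computing componentwise, $(W\mathbf{v}_p)_i=(\mathbf{v}_p)_{i-1}$ with indices read modulo $N$, which equals $\omega_p^{i-2}$ for $i\ge 2$ and $\omega_p^{N-1}$ for $i=1$; using $\omega_p^{N}=1$ both cases collapse to $\omega_p^{N-1}(\mathbf{v}_p)_i$, so $\lambda_p(W)=\omega_p^{N-1}$. Linear independence of the $\mathbf{v}_p$ follows because the matrix whose columns are the $\mathbf{v}_p$ is a Vandermonde matrix built from the distinct $N$-th roots of unity $\omega_p$, hence invertible; thus $\{\mathbf{v}_p\}_{p\in I_N}$ is a basis of $\mathbb{C}^N$.

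Finally, since $C$ is a polynomial in $W$ and the $\mathbf{v}_p$ are eigenvectors of $W$, they are automatically eigenvectors of $C$, with eigenvalue obtained by substituting $\lambda_p(W)$ into that polynomial, namely $\sum_{j=1}^{N}c_{1,j}\lambda_p(W)^{N-j+1}=\sum_{j=1}^{N}c_{1,j}(\omega_p^{N-1})^{N-j+1}$. The only step left is the simplification $(\omega_p^{N-1})^{N-j+1}=\omega_p^{j-1}$, which again follows from $\omega_p^{N}=1$: writing $\omega_p^{N-1}=\omega_p^{-1}$ gives $\omega_p^{-(N-j+1)}=\omega_p^{j-1-N}=\omega_p^{j-1}$. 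This yields the stated eigenvalue formula \eqref{FormulaAutovalorDaMatrizCirculante} and completes the argument.
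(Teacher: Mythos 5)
Your proposal is correct and follows essentially the same route as the paper: reduce to the shift matrix $W$, express $C$ as the polynomial $\sum_{j=1}^{N}c_{1,j}W^{N-j+1}$, and obtain the eigenvalues of $C$ by substituting the eigenvalue $\omega_p^{N-1}$ of $W$ into that polynomial. The only (cosmetic) difference is that the paper deduces the eigenbasis from the characteristic polynomial $P^W(x)=x^N-1$, while you verify $W\mathbf{v}_p=\omega_p^{N-1}\mathbf{v}_p$ directly and get linear independence from the Vandermonde structure of the $\mathbf{v}_p$, thereby filling in the index bookkeeping the paper leaves as a ``direct calculation.''
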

\begin{proof}
A direct calculation shows that the characteristic polynomial of $W$ is given by $P^{W}(x)=x^{N}-1$.
Thus the roots of unity forms a full set of eigenvalues of $W$. Moreover, its easy to see that $W(\mathbf{v}_p)=\omega_p^{N-1}\cdot \mathbf{v}_p$. The expression of $C$, as polynomial, follows from the form of the powers for $W$. From this expression we conclude that $C$ share the same eigenbasis of $W$, and the eigenvalues for $C$ are the correspondent polynomial calculated at eigenvalues of $W$.
\end{proof}
For more, properties of circulants matrices, we refer the reader to \cite{34}.

Using Lemma \ref{LemaOfCiculantsMatrices} we easily establish the following lemma.
\begin{lemma}
\label{LemaFormaComplexaDosAutovalores}
The eigenvalue $\lambda_p(A_{TS})$ associated to the eigenvector $\mathbf{v}_p$ of $A_{TS}$ is determined by the expression:
\begin{align}
\begin{array}{ccl}
\label{AutovalorATSformacomplexaPDifN-1}
\lambda_p(A_{TS})&=&\displaystyle \sum_{j=1}^{N} \dfrac{r_T-r_S\omega_{j-1}}{|r_T-r_S\omega_{j-1}|^{a}}\omega_{p}^{j-1}+\delta_{p,N-1}\cdot r_S\frac{\nu^2}{M}N,\quad \mbox{ if }\quad T \neq S,\\
\lambda_p(A_{TT})&=&\displaystyle \sum_{j=2}^{N} \dfrac{r_T-r_T\omega_{j-1}}{|r_T-r_T\omega_{j-1}|^{a}}\omega_{p}^{j-1}+\delta_{p,N-1}\cdot r_T\frac{\nu^2}{M}N,
\end{array}
\end{align}
where $\delta_{p,N-1}$ is the Kronecker delta.
\end{lemma}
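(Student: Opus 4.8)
The plan is to obtain each eigenvalue by directly applying Lemma~\ref{LemaOfCiculantsMatrices} to the circulant matrix $A_{TS}$, whose entries are given in \eqref{MatrizATS}. Since that lemma expresses the eigenvalue attached to $\mathbf{v}_p$ as $\sum_{j=1}^{N} c_{1,j}\,\omega_p^{j-1}$, the only data I need is the first row of $A_{TS}$, that is, the entries $a_{1j}$ obtained by setting $k=1$ in \eqref{MatrizATS}. I would therefore split the computation into the off-diagonal-block case $T\neq S$ and the diagonal-block case $T=S$, since these differ only in the single entry $a_{11}$.

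First, for $T\neq S$ every entry of the first row has the form $a_{1j}=\frac{r_T-r_S\omega_{j-1}}{|r_T-r_S\omega_{j-1}|^{a}}+\frac{\nu^2}{M}r_S\omega_{j-1}$, so the eigenvalue separates into an interaction sum $\sum_{j=1}^{N}\frac{r_T-r_S\omega_{j-1}}{|r_T-r_S\omega_{j-1}|^{a}}\omega_p^{j-1}$, which is already the first term of the claimed formula, plus the center-of-mass contribution $\frac{\nu^2}{M}r_S\sum_{j=1}^{N}\omega_{j-1}\omega_p^{j-1}$. For $T=S$ the only change is the diagonal entry $a_{11}=\frac{\nu^2}{M}r_T$: it removes the singular self-interaction term, so that the interaction sum now runs from $j=2$, while its value supplies exactly the $j=1$ term $\frac{\nu^2}{M}r_T\,\omega_0\omega_p^0=\frac{\nu^2}{M}r_T$ of the center-of-mass sum, which is therefore again the full sum $\frac{\nu^2}{M}r_T\sum_{j=1}^{N}\omega_{j-1}\omega_p^{j-1}$.

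The single computation that makes both formulas collapse to their stated form is the root-of-unity evaluation of $\sum_{j=1}^{N}\omega_{j-1}\omega_p^{j-1}$. Using $\omega_{j-1}=e^{\theta(j-1)\sqrt{-1}}$ and $\omega_p^{j-1}=e^{\theta p(j-1)\sqrt{-1}}$, the summand equals $\omega_{p+1}^{\,j-1}$, so the sum is the geometric series $\sum_{j=0}^{N-1}\omega_{p+1}^{\,j}$. Because $\omega_{p+1}$ is an $N$-th root of unity, this equals $N$ exactly when $\omega_{p+1}=1$, that is when $p\equiv N-1 \pmod{N}$, and equals $\frac{\omega_{p+1}^{N}-1}{\omega_{p+1}-1}=0$ otherwise. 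Hence the center-of-mass contribution reduces to $\delta_{p,N-1}\cdot r_S\frac{\nu^2}{M}N$ (respectively $r_T\frac{\nu^2}{M}N$ when $T=S$), which yields the two displayed expressions.

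I do not expect any genuine obstacle: once Lemma~\ref{LemaOfCiculantsMatrices} is granted, the result is a one-line substitution followed by the elementary evaluation of the geometric sum. The only point requiring mild care is the index arithmetic, namely verifying the shift $\omega_{j-1}\omega_p^{j-1}=\omega_{p+1}^{\,j-1}$ and checking that, over the range $p\in I_N$, the congruence $p+1\equiv 0\pmod{N}$ selects exactly $p=N-1$, which is precisely what the Kronecker delta in the statement records.
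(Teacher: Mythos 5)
Your proposal is correct and follows essentially the same route as the paper: both apply the circulant eigenvalue formula \eqref{FormulaAutovalorDaMatrizCirculante} to the first row of \eqref{MatrizATS} and then evaluate $\sum_{j=1}^{N}\omega_{j-1}\omega_p^{j-1}$ as $N$ when $p=N-1$ and $0$ otherwise, which produces the Kronecker delta term. Your handling of the $T=S$ case---observing that the diagonal entry $a_{11}=\frac{\nu^2}{M}r_T$ supplies exactly the missing $j=1$ term of the center-of-mass sum---is simply a spelled-out version of what the paper dismisses with ``the case $T=S$ is treated likewise.''
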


\begin{proof}
Consider the case where $T \neq S.$ Using \eqref{MatrizATS} and \eqref{FormulaAutovalorDaMatrizCirculante} the eigenvalues have expressions:
\begin{equation*}
\lambda_p(A_{TS})=\sum_{j=1}^{N}\left( \frac{r_T-r_S \omega_{j-1}}{|r_T-r_S\omega_{j-1}|^{a}}+\frac{\nu^2}{M}r_S\omega_{j-1}\right)(\omega_p)^{j-1},\quad p\in I_{N}.\label{formuladeautovalor}
\end{equation*}
The proof is completed by noticing that $\displaystyle \sum_{j=1}^{N}\omega_{j-1}\omega_p^{j-1}=\left\{\begin{array}{ccc}
0 & if & p \neq N-1\\
N & if & p=N-1
\end{array}\right.$.
The case  $T=S$  is treated likewise. It is worth noting that only $\lambda_{N-1}(A_{TS})$ depends on $\frac{\nu^2}{M}$.
\end{proof}

 The following lemma is based on Lemmas $5$ and $9$ of \cite{13}.
\begin{lemma}
\label{LemaDosAutoValoresReais}
If $x$ and $y$ are real, the sums $ \displaystyle \lambda_p(x,y)=\sum_{j=1}^{N} \frac{x-y\omega_{j-1}}{|x-y\omega_{j-1}|^{a}}\omega_p^{j-1}$ and \\ $\tilde{\lambda}_p(x,y)= \displaystyle \sum_{j=2}^{N} \frac{x-y\omega_{j-1}}{|x-y\omega_{j-1}|^{a}}\omega_p^{j-1}$ are real.
\end{lemma}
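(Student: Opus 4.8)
The plan is to show that each of the two sums is invariant under complex conjugation, hence real. I would treat $\lambda_p(x,y)$ first and then observe that the argument transfers verbatim to $\tilde\lambda_p(x,y)$. To streamline the indices, reindex by $k=j-1$, so that $\lambda_p(x,y)=\sum_{k=0}^{N-1}\frac{x-y\omega_k}{|x-y\omega_k|^a}\omega_p^k$, where $\omega_k=e^{\theta k\sqrt{-1}}$ and $\omega_p^k=e^{\theta pk\sqrt{-1}}$.

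Since $x$ and $y$ are real, termwise conjugation gives $\overline{x-y\omega_k}=x-y\overline{\omega_k}=x-y\omega_{-k}$, and because a complex number and its conjugate share the same modulus, $|x-y\omega_{-k}|=|\overline{x-y\omega_k}|=|x-y\omega_k|$. Together with $\overline{\omega_p^{k}}=\omega_p^{-k}$ this yields
\[
\overline{\lambda_p(x,y)}=\sum_{k=0}^{N-1}\frac{x-y\omega_{-k}}{|x-y\omega_{-k}|^a}\,\omega_p^{-k}.
\]
The key step is then the reindexing $k\mapsto N-k$ (equivalently $k\mapsto -k\bmod N$), which is a bijection of $\{0,1,\dots,N-1\}$ onto itself. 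Using $\omega_N=\omega_p^N=1$ we have $\omega_{-k}=\omega_{N-k}$ and $\omega_p^{-k}=\omega_p^{N-k}$, so relabeling the summation index turns the displayed sum back into $\lambda_p(x,y)$. Hence $\overline{\lambda_p(x,y)}=\lambda_p(x,y)$, i.e. $\lambda_p(x,y)\in\mathbb{R}$.

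For $\tilde\lambda_p(x,y)$ the only difference is that the index runs over $k=1,\dots,N-1$, the term $k=0$ being omitted. Since the involution $k\mapsto N-k$ fixes the removed value (it sends $0$ to $N\equiv 0$) and permutes $\{1,\dots,N-1\}$ among itself, the identical computation gives $\overline{\tilde\lambda_p(x,y)}=\tilde\lambda_p(x,y)$. I do not expect a genuine obstacle here: the argument is entirely formal, and the only point deserving care is verifying that the conjugation-induced map $k\mapsto -k\bmod N$ preserves each summation range, which it does in both cases.
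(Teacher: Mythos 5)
Your proof is correct and takes essentially the same approach as the paper: the paper observes that the first term is real and that for $j\geq 2$ the $j$-th term is the complex conjugate of the $(N-j+2)$-th, which is precisely your involution $k\mapsto N-k$ (with $k=j-1$) phrased as pairwise conjugate terms rather than as conjugation-invariance of the whole sum. Both arguments rest on the same facts --- $x,y$ real, $\lvert x-y\omega_{-k}\rvert=\lvert x-y\omega_{k}\rvert$, and $\omega_{N}=\omega_p^{N}=1$ --- so nothing further is needed.
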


\begin{proof}
In the expression $\lambda_p(x,y)$ it suffices to note that the first term is real and if $j\geq 2$ the $j$-th term is the complex conjugate of the $(N-j+2)$-th. Thus  $\lambda_p(x,y)$ is real, and so is $\tilde{\lambda}(x,y)$.
\end{proof}

Applying Lemma \ref{LemaDosAutoValoresReais} to the expressions of the eigenvalues in \eqref{AutovalorATSformacomplexaPDifN-1} and making some simplifications, these expressions become respectively the $f_p$ and $\xi_p$ given below.

\begin{definition}
Let $p \in I_N$, and $\theta=\frac{2\pi}{N}$, we define $f_{p}: \mathbb{R}^2\setminus \left\{(x,x)| x \in \mathbb{R}\right\} \rightarrow \mathbb {R}$ by
\begin{equation}
f_p(r_T,r_S)=\sum_{j=1}^{N} \frac{r_T\cos\left( j\theta p\right)-r_S\cos\left( j\theta (p+1)\right)}{\left(r_T^2-2r_Tr_S\cos\left( j\theta\right)+r_S^2\right)^{\frac{a}{2}}},\label{FuncaoF}
\end{equation}
and $\xi_p:\mathbb{R}^{+}\rightarrow \mathbb{R}$ by
\begin{equation}
\xi_p(r_T)=(2r_T)^{1-a}\sum_{j=1}^{N-1}  \sin\left(\frac{ j\theta (2p+1)}{2}\right)\sin^{1-a}\left(\frac{ j\theta}{2}\right).\label{FuncaoE}
\end{equation}
\end{definition}

By the Lemmas \ref{LemaFormaComplexaDosAutovalores} and \ref{LemaDosAutoValoresReais}, $\det[\lambda_p(A_{TS})]$ is a function of the radii $r_i,i\in I_{L}$ and if $p \neq N-1$, could be expressed by:
\begin{equation*}
\det[\lambda_p(A_{TS})]
=\left|\begin{array}{cccc}
\xi_p(r_1)&f_p(r_1,r_2)&\ldots&f_p(r_1,r_L)\\
f_p(r_2,r_1)&\xi_p(r_2)&\ldots&f_p(r_2,r_L)\\
\vdots&\vdots&\ddots&\vdots\\
f_p(r_L,r_1)&f_p(r_L,r_2)&\ldots&\xi_p(r_L)
\end{array}\right| .
\end{equation*}
Since $f_p$ and $\xi_p$ are homogeneous functions of degree $1-a$, we  conclude that $\det[\lambda_p(A_{TS})]$ is product of $(r_1\dotsm r_L)^{1-a}$ by the factor
\begin{align}
\left|\begin{array}{cccc}
\xi_p(1)&f_p\left(1,\frac{r_2}{r_1}\right)&\ldots&f_p\left(1,\frac{r_L}{r_1}\right)\\
f_p\left(1,\frac{r_1}{r_2}\right)&\ddots&\ldots&\vdots\\
\vdots&\ldots&\ddots&f_p\left(1,\frac{r_{L}}{r_{L-1}}\right)\\
f_p\left(1,\frac{r_1}{r_L}\right)&\ldots&f\left(1,\frac{r_{L-1}}{r_L}\right)&\xi_p(1)
\end{array}\right|.
\label{DefinicaoDafuncaoGammaIgualAoDet}
\end{align}
Thus $\det[\lambda_p(A_{TS})]$ is nonzero as long as the above determinant is nonzero. 

\section{Two polygons}
\label{Sec:3}

For two polygons \eqref{DefinicaoDafuncaoGammaIgualAoDet} becomes
\begin{align}
\left|\begin{array}{cc}
\xi_p(1)&f_p\left(1,\frac{r_2}{r_1}\right)\\f_p\left(1,\frac{r_1}{r_2}\right)&\xi_p(1)
\end{array}\right|.
\label{Determinante2Por2DoSubsistemaP}
\end{align}

 In order to show  that determinant is strictly positive, we only need to show that $f_p(1,x)$ and $f_p\left(1,\frac{1}{x}\right)$ have  opposite signs and are different from zero, here $x=\frac{r_2}{r_1} \neq 1$.
By homogeneity $f_p\left(1,\frac{1}{x}\right)=x^{a-1}f_p(x,1)$, so it is enough to show that $f_p(1,x)$ e $f_p(x,1)$ have different signs.
But $f_{N-p-1}(1,x)=-f_p(x,1)$ and $f_{N+p}(x,y)=f_p(x,y)$, hence   
it suffices to prove
 that $f_p(x,1)$ has always the same sign for all $p\in I_N$.

In the subsection \ref{subsec:4.1} we make this analysis inspired by an idea used in lemma $2$ of \cite{5}. Once again by symmetry, if we prove the particular case $x \in (0,1)$, the general assertion follows, once for $x>1$ we could use the identities $f_p\left(1,\frac{1}{x}\right)=x^{a-1}f_p(x,1)$ and $f_p\left(\frac{1}{x},1\right)=x^{a-1}f_p(1,x)$.

\subsection{Analysis of the sign function $f_p(x,1)$}
\label{subsec:4.1}

We want to show that the function $f_p(x,1)$ is negative if $x$ is in the open interval $(0,1)$. To that effect, we will find an explicit power series for $f_p(x,1)$ whose all coefficients are all non-positive. This implies that $f_p(x,1)$ and all its derivatives are negative in $(0,1)$.

    Consider the function  $\phi: \mathbb{C}\setminus\{1\}\rightarrow \mathbb{C}$  given by
$\phi(z)=1/(1-z)^{\frac{a}{2}}$. Its Taylor series around
 the origin
$\phi(z)=\displaystyle \sum_{n=0}^{\infty} \alpha_n z^{n}$ converges at open unit disc and
\begin{Aff}
 If $a$ is positive then all coefficients $\alpha_k$ are positive.
\end{Aff}
\begin{Aff}
 If $a> 2$ the sequence of coefficients $\alpha_k$ is increasing, and if $a=2$ the sequence is constant.
\end{Aff}

To prove these affirmations we only need to note  $\displaystyle \alpha_k=(-1)^k\binom{
-\frac{a}{2}}{k}$, where the last parenthesis stands for binomial coefficients.

Consider $x \in \mathbb{R}$ with $|x|<1$ , so
\begin{align}
\frac{1}{(1-2x\cos(\theta j)+x^2)^{\frac{a}{2}}}&=&\frac{1}{\left(1-xe^{+\theta j\sqrt{-1} }\right)^{\frac{a}{2}}}\cdot\frac{1}{\left(1-xe^{-\theta  j\sqrt{-1}}\right)^{\frac{a}{2}}}\nonumber\\
&=&
 \sum_{n=0}^{\infty}\left( \sum_{k+l=n} \alpha_k\alpha_l e^{\theta j(k-l)\sqrt{-1}}\right)x^{n}.\label{identidadeDaMultiplicacaodeSeries}
\end{align}
Using (\ref{identidadeDaMultiplicacaodeSeries}) we write for $x \in (0,1)$:

\begin{align*}
&f_p(x,1)=\sum_{j=1}^{N} \frac{x\cos\left( j\theta p\right)-\cos\left( j \theta(p+1)\right)}{\left(1-2x\cos\left( j\theta\right)+x^2\right)^{\frac{a}{2}}}=\displaystyle\sum_{n=0}^{\infty}\beta_n x^{n},
\end{align*}
where
\begin{align*}
&\beta_0=-(\alpha_0)^2\left( \sum_{j=1}^{N} \cos\left(j\theta (p+1)\right)\right),\\
\displaystyle\beta_n=&\sum_{k+l=n-1} \alpha_k\alpha_l \sum_{j=1}^{N} \cos\left( j \theta p\right) e^{\theta j(k-l)\sqrt{-1}}\\
&-\sum_{k+l=n} \alpha_k\alpha_l\sum_{j=1}^{N} \cos\left( j\theta(p+1)\right) e^{\theta j(k-l)\sqrt{-1}}.
\end{align*}

Consider only the coefficients $\beta_n$ from powers of degree non-zero, by isolating the index $l$ we  get:
\begin{align}
\begin{split}
\beta_n=
-\alpha_n\alpha_0\left( \sum_{j=1}^{N} \cos\left( j\theta(p+1)\right) e^{\theta j n\sqrt{-1}}\right)+\end{split}\nonumber\\
\begin{split}
\sum_{k=0}^{n-1} \alpha_k\left[\alpha_{n-k-1}\sum_{j=1}^{N} \cos\left( j \theta p\right) e^{\theta j(2k-(n-1))\sqrt{-1}}\right.\\
\left.- \alpha_{n-k}\sum_{j=1}^{N} \cos\left( j\theta(p+1)\right) e^{\theta j(2k-n)\sqrt{-1}}\right] .
\end{split}\label{galapagos}
\end{align}
It is easy to check that for integers $u$ and $v$ the imaginary part of\\ $\displaystyle\sum_{j=1}^{N} \cos\left( j\theta u\right) e^{\theta jv\sqrt{-1}}$ vanishes, so
\begin{align}
&\sum_{j=1}^{N} \cos\left( j\theta u\right) e^{\theta jv\sqrt{-1}}=\frac{1}{2}\sum_{j=1}^{N}\left[ \cos\left( j\theta(u+v)\right)+ \cos\left( j\theta(u-v)\right)\right]
\label{summersong}.
\end{align}
Using \eqref{summersong} the expression for $\beta_n$ in \eqref{galapagos} becomes half of the following sum:
\begin{align}
&-\alpha_n\alpha_0\left( \sum_{j=1}^{N} \cos\left( j\theta(p+1+n)\right)+\cos\left( j\theta(p+1-n)\right) \right)+\label{CauseWeveEndedAsLovers}\\
&\sum_{k=0}^{n-1} \alpha_k\left[(\alpha_{n-k-1}-\alpha_{n-k})\left(\sum_{j=1}^{N} \cos\left( j\theta(p+1+(2k-n))\right)\right)\right]+\label{europa}\\
&\sum_{k=0}^{n-1}\alpha_{k}\left[\alpha_{n-k-1}\sum_{j=1}^{N}\cos\left( j\theta(p-2k+n-1)\right)-\alpha_{n-k}\sum_{j=1}^{N}\cos\left( j\theta(p+1-2k+n)\right)\right].
\label{blackstar}
\end{align}
We shall now prove that 
the sum above is negative or zero. For this note that expressions in  \eqref{CauseWeveEndedAsLovers} and \eqref{europa} are already negative or zero because
for integers $s$ the sum $\sum_{j=1}^{N} \cos\left( j\theta s\right)$ is $N$ if $s$ is a multiple of $N$ and $0$ otherwise, additionally
the sequence of positive terms $\alpha_k$ is increasing.\\
 The expression in \eqref{blackstar} only contain a $k$-th term positive if 
 \begin{equation}
 \sum_{j=1}^{N}\cos\left( j\theta(p-2k+n-1)\right)= N \quad \text{and} \quad \sum_{j=1}^{N}\cos\left( j\theta(p+1-2k+n)\right)=0.
 \label{layla}
 \end{equation}
 We will now show that in this case there is always a negative one to compensate.
If $k\neq n-1$, consider the sum of terms of index $k$ and $(k+1)$ in \eqref{blackstar}
\begin{align*}
&\alpha_{k}\left[\alpha_{n-k-1}\sum_{j=1}^{N}\cos\left( j\theta(p-2k+n-1)\right)-\alpha_{n-k}\sum_{j=1}^{N}\cos\left( j\theta(p+1-2k+n)\right)\right]+\\
&\alpha_{k+1}\left[\alpha_{n-k-2}\sum_{j=1}^{N}\cos\left( j\theta(p-2k+n-3)\right)-\alpha_{n-k-1}\sum_{j=1}^{N}\cos\left( j\theta(p-2k+n-1)\right)\right].
\end{align*}

 By \eqref{layla}, the number $p-2k+n-1$ is multiple of $N$, since  and $N\geq 3$, then $p-2k+n-3$ is not multiple of $N$, so the sum $\sum_{j=1}^{N}\cos\left(j\theta(p-2k+n-3)\right)$ is zero. Then adding the $k$-th term with the $(k+1)$-th term we get
\begin{align*}
&\alpha_k\alpha_{n-k-1}N+\alpha_{k+1}(-\alpha_{n-k-1}N)=N\alpha_{n-k-1}(\alpha_k-\alpha_{k+1}),
\end{align*}
which is negative because the sequence of positive terms $\alpha_k$ is increasing.

If $k=n-1$, we have $\sum_{j=1}^{N}\cos\left( j\theta(p-2k+n-1)\right)=\sum_{j=1}^{N}\cos\left( j\theta(p+1-n)\right)=N$ and the $k$-th term it would be $\alpha_{n-1}\alpha_0N$. Note that a factor in \eqref{CauseWeveEndedAsLovers} have the term $\sum_{j=1}^{N}\cos\left( j\theta(p+1-n)\right)=N$ as well. The sum of both would result in $N(\alpha_{n-1}\alpha_0-\alpha_n\alpha_{0})=N\alpha_{0}(\alpha_{n-1}-\alpha_n)\leq 0$.
This finishes the proof.
\subsection{Conclusion for two polygons case}

The determinant \eqref{Determinante2Por2DoSubsistemaP} it will be strictly positive if $p \neq N-1$, so the corresponding coefficients in \eqref{VetorDeMassas} are zero. This show that a possible solution for the equations (\ref{EquacaoMatricialParaOProblema}) with $L=2$, must be given by:
\begin{equation}
\mathbf{m}=\left(x_1^{N-1}\mathbf{e}_1\otimes\mathbf{v}_{N-1}+x_2^{N-1}\mathbf{e}_2\otimes\mathbf{v}_{N-1}+x_1^{N}\mathbf{e}_1\otimes\mathbf{v}_{N}+x_2^{N}\mathbf{e}_2\otimes\mathbf{v}_{N}\right).
\end{equation}

It is not possible that all the coordinates of the vector $x_1^{N-1}\mathbf{e}_1\otimes\mathbf{v}_{N-1}+x_2^{N-1}\mathbf{e}_2\otimes\mathbf{v}_{N-1}$  are real if $N>2$ (see the lemma 6,  in \cite{13},\footnote{Note that  $\mathbf{v}_{N-1}=\omega_{N-1}\mathbf{V}_{N}$, where $\mathbf{V}_{N}$ is the vector on the referred lemma, and $\tilde{a}\mathbf{V}_{N}\in \mathbb{R}^N$ only if $\tilde{a}=0$ or $N=2$.}).
Hence for real masses (or vorticities) the vector solution must be 
$x_1^N\mathbf{e}_1\otimes\mathbf{v}_{N}+x_2^{N}\mathbf{e}_2\otimes\mathbf{v}_{N}$
with $x_1^{N}$ and $x_2^{N}$ being real numbers.

 We conclude that the masses in each polygon must be equal. Additionally, we note as consequence of determinant be non-zero for the case $P=N,$ it follows that for any fixed positive radii $r_1$ and $r_2$ and an arbitrary angular velocity $\nu $, there exists masses (equal in each polygon), which make a relative equilibrium.
And this masses are uniquely determined. Although occasionally are not positive as we will see in the Subsection \ref{sign_of_masses}.

We summarize the results obtained in the next theorem.
\begin{theorem}
\label{TwoPolygons}
A relative equilibrium consisting of two homothetic regular polygons, associated to a potential of the form (\ref{PotencialQualquer}), only is possible if the masses in each polygon are equal.
Moreover fixed any radii $r_1\neq r_2$, this configuration aways exist.
\end{theorem}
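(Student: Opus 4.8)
The plan is to read off both assertions from the block decomposition already set up: writing a candidate mass vector in the eigenbasis as in \eqref{VetorDeMassas}, being a relative equilibrium is equivalent, for each $p\in I_N$, to solvability of a $2\times 2$ linear system whose coefficient matrix has determinant \eqref{Determinante2Por2DoSubsistemaP}. So everything comes down to deciding, index by index, whether that determinant vanishes and then translating the surviving coefficients back into masses.

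First I would treat every index $p\neq N-1$. With $x=r_2/r_1\neq 1$ the determinant \eqref{Determinante2Por2DoSubsistemaP} equals $\xi_p(1)^2-f_p(1,x)\,f_p(1,1/x)$, and by homogeneity $f_p(1,1/x)=x^{a-1}f_p(x,1)$. Using the symmetries $f_{N-p-1}(1,x)=-f_p(x,1)$ and $f_{N+p}=f_p$, and reducing to $x\in(0,1)$, the whole question collapses to: does $f_q(x,1)$ keep one fixed sign for every index $q$? The power-series computation of Subsection \ref{subsec:4.1} answers this with $f_q(x,1)<0$. Then $f_p(1,x)$ and $f_p(x,1)$ have opposite signs, the product $f_p(1,x)f_p(1,1/x)$ is negative, and the determinant is $\xi_p(1)^2$ minus a negative number, hence strictly positive. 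For $p\in\{1,\dots,N-2\}$ the system \eqref{SubsistemaP} is homogeneous, forcing $x_1^p=x_2^p=0$; for $p=N$ the inhomogeneous system \eqref{SubsistemaN} then has a unique solution $(x_1^N,x_2^N)$.

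The one index escaping the sign argument is $p=N-1$, because $\lambda_{N-1}(A_{TS})$ carries the extra term $r_S\frac{\nu^2}{M}N$ from the Kronecker delta in \eqref{AutovalorATSformacomplexaPDifN-1}, so I cannot certify its determinant nonzero, and this is the genuinely delicate point of the argument. Here I would instead use reality of the masses. The only possibly nonzero contributions are the $p=N-1$ and $p=N$ blocks, so the mass vector on polygon $i$ is $x_i^{N-1}\mathbf v_{N-1}+x_i^{N}\mathbf 1$; since $x_i^{N}\mathbf 1$ is real and, for $N>2$, no nonzero scalar multiple of $\mathbf v_{N-1}$ is real (Lemma $6$ of \cite{13}, as $\mathbf v_{N-1}$ is a nonzero complex multiple of the vector there), we must have $x_1^{N-1}=x_2^{N-1}=0$. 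Hence $\mathbf m=x_1^N\mathbf e_1\otimes\mathbf v_N+x_2^N\mathbf e_2\otimes\mathbf v_N$ with $\mathbf v_N=\mathbf 1$, i.e.\ all masses of the first polygon equal $x_1^N$ and all of the second equal $x_2^N$: this is the claimed necessity of equal masses within each polygon.

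Finally, for existence I note that the $p=N$ determinant is strictly positive for every $r_1\neq r_2$ and every $\nu$ (it is the $p=N$ case already handled), while the matrix $[\lambda_N(A_{TS})]$ and the right-hand side $(\nu^2r_1,\nu^2r_2)$ are real by Lemmas \ref{LemaFormaComplexaDosAutovalores} and \ref{LemaDosAutoValoresReais}; so \eqref{SubsistemaN} has a unique real solution, producing a genuine configuration of equal masses in each polygon. The only step that requires real work is the sign analysis $f_q(x,1)<0$ of Subsection \ref{subsec:4.1}; the remainder is linear algebra together with the reality obstruction for $\mathbf v_{N-1}$.
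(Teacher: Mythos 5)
Your proposal is correct and follows essentially the same route as the paper: strict positivity of the determinant \eqref{Determinante2Por2DoSubsistemaP} for $p\neq N-1$ via the homogeneity and symmetry identities reducing to $f_p(x,1)<0$ on $(0,1)$ (the power-series argument of Subsection \ref{subsec:4.1}), the reality obstruction for $\mathbf{v}_{N-1}$ (Lemma~6 of \cite{13}) to kill the $p=N-1$ block, and the nonvanishing real $p=N$ system for existence and uniqueness. Your explicit flagging of why $p=N-1$ escapes the determinant argument (the $r_S\frac{\nu^2}{M}N$ term) matches the paper's remark that only $\lambda_{N-1}(A_{TS})$ depends on $\frac{\nu^2}{M}$.
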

 Its interesting to compare with the results in \cite{5}, where is demonstrated that for a choice of masses (equal in each polygon), there exists two pair of radius which make it the configuration a relative equilibrium.

\subsection{The sign of the masses in the central configuration}
\label{sign_of_masses}
The masses (or vorticities) that gives rise to a relative equilibrium are not always positive, in fact if the polygons are close, then the masses corresponding could not be positive.
More specifically, we demonstrated the following theorem:

\begin{theorem} Let $m_1$ be the mass at polygon of radius $r_1$ and $m_2$ be the mass at polygon of radius $r_2$, in a central configuration where the positions form homothetic regular polygons. Fixed the radii $r_2>r_1$ there is an open interval $(0,\delta)$ such that if $r_1 \in (0,\delta)$ the masses $m_1$ and $m_2$ must have same sign. And if $r_1\in (\delta, r_2)$ the masses have opposite signs.
\end{theorem}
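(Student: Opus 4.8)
The plan is to determine the two common masses explicitly from the only block of the linear system that survives for real solutions, and then to track the sign of a single scalar function of the ratio $x=r_1/r_2\in(0,1)$. First I would recall from the two-polygon analysis that the solution must have the form $x_1^N\mathbf e_1\otimes\mathbf v_N+x_2^N\mathbf e_2\otimes\mathbf v_N$ with $\mathbf v_N=\mathbf 1$, so the common masses are $m_1=x_1^N$ and $m_2=x_2^N$, obtained from subsystem \eqref{SubsistemaN} with $L=2$. Since $\omega_N=1$, the relevant eigenvalues collapse to $\lambda_N(A_{TT})=\xi_N(r_T)$ and $\lambda_N(A_{TS})=f_N(r_T,r_S)$ for $T\neq S$, the functions \eqref{FuncaoE} and \eqref{FuncaoF} evaluated at $p=N$. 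Solving the $2\times2$ system by Cramer's rule, and noting that its determinant is the $p=N$ instance of \eqref{Determinante2Por2DoSubsistemaP} (hence strictly positive by the work in Subsection \ref{subsec:4.1}), I obtain $m_1$ proportional to $r_1\xi_N(r_2)-r_2f_N(r_1,r_2)$ and $m_2$ proportional to $r_2\xi_N(r_1)-r_1f_N(r_2,r_1)$, with the same positive proportionality factor.

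Next I would collect the signs of the ingredients for $x\in(0,1)$: one has $\xi_N(1)=2^{1-a}\sum_{j=1}^{N-1}\sin^{2-a}(j\theta/2)>0$; $f_N(x,1)<0$ by the series argument of Subsection \ref{subsec:4.1} (which covers $p=N$); and $f_N(1,x)>0$ because every numerator $1-x\cos(j\theta)\ge 1-x>0$. Using homogeneity of degree $1-a$, $f_N(r_1,r_2)=r_2^{1-a}f_N(x,1)<0$, so the bracket for $m_1$ is a sum of two positive terms and $m_1$ keeps a constant sign for all $r_1\in(0,r_2)$. Consequently $m_1$ and $m_2$ have the same sign exactly when the bracket for $m_2$ is positive, and a short homogeneity computation shows this bracket equals $r_2^{2-a}x^{1-a}G(x)$ with $G(x):=\xi_N(1)-x^af_N(1,x)$. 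Thus everything reduces to the sign of $G$ on $(0,1)$.

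Then I would analyze $G$. At the endpoints, $G(0^+)=\xi_N(1)>0$, while $G(1^-)=-\infty$ since the $j=N$ summand of $f_N(1,x)$ is exactly $(1-x)^{1-a}\to+\infty$. Hence $G$ has at least one zero, which already separates a same-sign region from an opposite-sign region. To obtain the single threshold $\delta$ asserted in the theorem I must prove that $G$ is strictly decreasing, equivalently that $\Phi(x):=x^af_N(1,x)$ is strictly increasing; since $\Phi'(x)=x^{a-1}\bigl(af_N(1,x)+xf_N'(1,x)\bigr)$, this holds as soon as $f_N(1,x)$ has non-negative Taylor coefficients at the origin. Granting this, $G$ has a unique root $x_0$, and setting $\delta=x_0r_2$ gives the same sign for $r_1\in(0,\delta)$ and opposite signs for $r_1\in(\delta,r_2)$.

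The main obstacle is precisely the monotonicity of $\Phi$, i.e.\ the non-negativity of the Taylor coefficients of $f_N(1,x)$. Term-by-term monotonicity fails (the summands with $\cos(j\theta)$ near $0$ are decreasing), so I would mimic the bookkeeping of Subsection \ref{subsec:4.1}: expand each summand of $f_N(1,x)$ via \eqref{identidadeDaMultiplicacaodeSeries}, sum over $j$ so that only the phases which are multiples of $N$ contribute, and verify that the surviving coefficients are non-negative using that the sequence $\{\alpha_k\}$ is positive and non-decreasing (the two Affirmations). This is the exact mirror image of the computation that produced non-positive coefficients for $f_p(x,1)$, now yielding non-negative ones for $f_N(1,x)$, and it is the delicate step on which the clean two-interval conclusion rests.
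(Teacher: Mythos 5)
Your proposal is essentially the paper's own proof: Cramer's rule on \eqref{SubsistemaN} with the strictly positive $p=N$ instance of \eqref{Determinante2Por2DoSubsistemaP} as denominator, the observation that the numerator determining $m_1$ is a sum of positive terms (so $m_1$ has the sign of $\nu^2$ throughout), the reduction of the sign of $m_2$ to a single scalar function of $x=r_1/r_2$ with limits $+\infty$ at $0^+$ and $-\infty$ at $1^-$, and strict monotonicity to obtain the unique threshold $\delta$. The one step you leave conditional --- the non-negativity of the Taylor coefficients of $f_N(1,x)$, which you propose to establish by redoing the bookkeeping of Subsection \ref{subsec:4.1} in mirror image --- requires no new computation at all: the identity $f_{N-p-1}(1,x)=-f_p(x,1)$, recorded just before Subsection \ref{subsec:4.1}, taken at $p=N-1$ together with the periodicity $f_{N+p}=f_p$, gives $f_N(1,x)=-f_{N-1}(x,1)$, and Subsection \ref{subsec:4.1} already shows that $f_{N-1}(x,1)$ has all coefficients non-positive; this reflection is precisely how the paper secures positivity of $f_N(1,x)$ and $f_N'(1,x)$ on $(0,1)$ in its monotonicity argument (phrased there as $\xi_N(r_1)$ decreasing and $r_1f_N(r_2,r_1)$ increasing). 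Your elementary bound $1-x\cos(j\theta)\geq 1-x>0$ is a nice shortcut for the positivity of the value $f_N(1,x)$, but it says nothing about the derivative, which is exactly why the reflection identity, not a fresh series expansion, is the efficient way to close your ``delicate step.''
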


\begin{proof}

By applying the Crammer's rule in the system \eqref{SubsistemaN}, with $L=2$, we get:
\begin{equation}
\label{EquacaoDasMassas}
m_1=\frac{\left|\begin{array}{cc}
r_1\nu^2 & f_N(r_1,r_2)\\
r_2\nu^2 & \xi_N(r_2)
\end{array}\right|}{\left|\begin{array}{cc}
\xi_N(r_1)& f_N(r_1,r_2)\\
f_N(r_2,r_1)& \xi_N(r_2)
\end{array}\right|}\quad\text{ and }\quad
m_2=\frac{\left|\begin{array}{cc}
\xi_N(r_1)& r_1\nu^2\\
f_N(r_2,r_1)& r_2\nu^2
\end{array}\right|}{\left|\begin{array}{cc}
\xi_N(r_1)& f_N(r_1,r_2)\\
f_N(r_2,r_1)& \xi_N(r_2)
\end{array}\right|}
\end{equation}
We have proved in the Subsection \ref{subsec:4.1}
Since the denominator is positive, the sign of the masses will be given by the numerator.
Thus the sign of $m_1$ is equal to the sign of $\nu^2$ because $\left(r_1\xi_N(r_2)-r_2f_N(r_1,r_2)\right)>0,$ since
$$\xi_N(r)=(2r)^{1-a}\sum_{j=1}^{N-1}  \sin^{2-a}\left(\frac{\pi j}{N}\right)>0,\text{ and  } f_N(r_1,r_2)=(r_2)^{1-a}f_N\left(\frac{r_1}{r_2},1\right)<0.$$

The mass $m_2$ have the same sign of $\nu^2(\xi_N(r_1)r_2-r_1f_N(r_2,r_1))$. From \eqref{FuncaoF} we conclude that $\displaystyle\lim_{r_1 \to r_2^-} f_N(r_2,r_1)=+\infty$, and this implies that $$\lim_{r_1 \to r_2^-} \xi_N(r_1)r_2-r_1f_N(r_2,r_1)=-\infty.$$
Moreover, using again \eqref{FuncaoF} follows that $\displaystyle\lim_{r_1\to 0^{+}} f_{N}(r_2,r_1)=(r_2)^{1-a}N$ and
 by \eqref{FuncaoE} we have $\displaystyle\lim_{r_1\to 0^{+}} \xi_{N}(r_1)=+\infty$, then
\begin{equation*}
\lim_{r_1 \to 0^+} \xi_N(r_1)r_2-r_1f_N(r_2,r_1)=+\infty.
\end{equation*}

To finish the proof is sufficient to show that $\xi_N(r_1)r_2-r_1f_N(r_2,r_1)$ is a monotone decreasing function in $r_1$. To  see this, note that $\xi_N(r_1)$ is decreasing in $r_1$.
And $f_N(r_2,r_1)$ is increasing in $r_1$, because $f_N(r_2,r_1)=(r_2)^{1-a}f_N\left(1,\frac{r_1}{r_2}\right)$ so
\begin{equation*}
\frac{d}{dr_1}f_N(r_2,r_1)=(r_2)^{1-a}f'_N\left(1,\frac{r_1}{r_2}\right).\frac{1}{r_2},
\end{equation*}
and $f'_N\left(1,\frac{r_1}{r_2}\right)$ is positive because by the previous sections $f_{N-1}(x,1)$ and all its derivatives are negative in $(0,1)$, and $f_N(1,x)=-f_{N-1}(x,1)$. It follows that $-r_1f_{N}(r_2,r_1)$ is decreasing in $r_1$. This proves the theorem.
\end{proof}

\begin{remark}
It is interesting to note that the case of negative masses is
physically important if $a=2$ ($N$-vortex case).
\end{remark}
\section{More polygons}
\label{sec:4}
\begin{theorem}
\label{EvenPolygons}
Let $L$ be an even number. Set $a \geq 2$, and consider a configuration associated to a potential of the form (\ref{PotencialQualquer}). 
It is possible choose radii, with $r_3,r_4$ much smaller than $r_1$ and $r_2$, and $r_5,r_6$ much smaller than $r_3$ and $r_4$  and so on, such that for any choice of the angular velocity $\nu$ there is an relative equilibrium consisting of $L$ homothetic regular polygons with these radii. Moreover such equilibrium only is possible if the masses in each polygon are equal.
\end{theorem}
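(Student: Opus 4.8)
The plan is to exploit exactly the spectral decoupling already used for two polygons. By \eqref{VetorDeMassas}, \eqref{SubsistemaP} and \eqref{SubsistemaN}, a mass vector is admissible if and only if, for each $p$, the coefficients $(x_1^p,\dots,x_L^p)$ solve the $L\times L$ system with matrix $[\lambda_p(A_{TS})]$, which up to the nonzero factor $(r_1\cdots r_L)^{1-a}$ is the matrix in \eqref{DefinicaoDafuncaoGammaIgualAoDet}; call it $M_p$. Its diagonal entries are $\xi_p(1)$ and its $(T,S)$ entry for $T\neq S$ is $f_p(1,r_S/r_T)$. Ordering the polygons by decreasing radius and collecting them into the $L/2$ consecutive pairs $\{r_{2b-1},r_{2b}\}$, I would show that as the prescribed scale separation becomes extreme, $M_p$ converges to a block-upper-triangular matrix whose diagonal $2\times2$ blocks are precisely the two-polygon matrices $B_b=\left(\begin{smallmatrix}\xi_p(1)&f_p(1,\rho_b)\\ f_p(1,\rho_b^{-1})&\xi_p(1)\end{smallmatrix}\right)$, with the intra-pair ratios $\rho_b=r_{2b}/r_{2b-1}\neq 1$ held fixed.

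The asymptotics driving this are read directly from \eqref{FuncaoF}. If polygon $S$ lies in a strictly smaller pair than $T$, then $r_S/r_T\to 0$, so $f_p(1,r_S/r_T)\to f_p(1,0)=\sum_{j=1}^N\cos(j\theta p)$, a finite number; these are the entries lying above the diagonal blocks and they are harmless. If instead $S$ lies in a strictly larger pair, then $r_S/r_T\to\infty$, and homogeneity gives $f_p(1,r_S/r_T)=(r_S/r_T)^{1-a}f_p(r_T/r_S,1)$; since $a\geq 2>1$ the prefactor tends to $0$ while the second factor stays bounded, so every entry below the diagonal blocks vanishes. The diagonal blocks depend only on the fixed ratios $\rho_b$, hence stay put. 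Consequently $\det M_p\to\prod_{b=1}^{L/2}\det B_b$ by continuity of the determinant, and for every $p\neq N-1$ each $\det B_b>0$ by the two-polygon computation of Section \ref{Sec:3}. Thus for sufficiently separated radii $\det M_p\neq 0$ for all $p\neq N-1$.

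With this in hand the conclusion mirrors Theorem \ref{TwoPolygons}. For $p\in I_{N-2}$ the homogeneous subsystem \eqref{SubsistemaP} has a nonsingular matrix, forcing $x_i^p=0$ for every $i$. For $p=N$ the matrix is again nonsingular and, by Lemma \ref{LemaFormaComplexaDosAutovalores}, $\nu$-independent, so the inhomogeneous system \eqref{SubsistemaN} has a unique solution $(x_1^N,\dots,x_L^N)$ for every prescribed $\nu$; this gives existence and uniqueness of the equilibrium, and the solution is real when $\nu$ is real, since both matrix and right-hand side are then real. The remaining index $p=N-1$ is not governed by a determinant, because its eigenvalues carry the $\nu^2/M$ terms; here I would argue as for two polygons, noting that after the previous steps the masses on the $i$-th polygon are $\mathbf{m}_i=x_i^{N-1}\mathbf{v}_{N-1}+x_i^N\mathbf{1}$, and such a vector is real only if $x_i^{N-1}=0$ (Lemma $6$ of \cite{13}, via $\mathbf{v}_{N-1}=\omega_{N-1}\mathbf{V}_N$ and $N>2$). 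Hence only the coefficients $x_i^N$ survive, so the masses are constant on each polygon.

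The main obstacle is the uniform justification of the block-triangular limit: one must check that the sub-diagonal blocks of $M_p$ decay while the diagonal blocks stay bounded away from singularity uniformly in $p$, so that $\mathrm{sign}(\det M_p)$ is genuinely inherited from $\prod_b\det B_b$ over an explicitly reachable range of scale separations rather than only in the limit. This reduces to the elementary but careful estimates on $f_p(1,x)$ as $x\to0$ and $x\to\infty$ indicated above, combined with the strict positivity of the two-polygon determinants established in Section \ref{Sec:3}.
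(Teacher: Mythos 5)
Your proposal is correct and takes essentially the same approach as the paper: the paper likewise degenerates the inter-pair radius ratios so that the matrix \eqref{DefinicaoDafuncaoGammaIgualAoDet} becomes block-triangular with two-polygon diagonal blocks, concludes by continuity of the determinant and the strict positivity from Section \ref{Sec:3}, and disposes of $p=N-1$ via the reality of the mass vector (Lemma $6$ of \cite{13}). The only difference is organizational — the paper proceeds by induction on $L/2$, shrinking one pair $r_{2k-1},r_{2k}$ at a time through a parameter $\alpha\to 0^{+}$, whereas you take one simultaneous multi-scale limit; and the uniformity issue you flag as the main obstacle is moot, since $p$ ranges over the finite set $I_N$, so a single sufficiently large separation works for all $p$ at once.
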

\begin{proof} 
We already know that theorem is valid if $L=2$. We now proceed by induction in $k$ where $L=2k$. By the discussion in Section \ref{sec:3},  only remains to prove that \eqref{DefinicaoDafuncaoGammaIgualAoDet} is non-zero.
We will show that if we choose $r_{2k-1}$ and $r_{2k}$ sufficiently small this determinant will be different from zero. In fact taking the limit when $\alpha \to 0^{+},$ in the following expression for $p \neq N-1,N$:
\begin{align*}
\left|\begin{array}{ccccccc}
\xi_p(1)&f_p\left(1,\frac{r_2}{r_1}\right)&\ldots & f_p\left(1,\frac{r_{2k-2}}{r_1}\right)&f_p\left(1,\frac{\alpha r_{2k-1}}{r_1}\right)&f_p\left(1,\frac{\alpha r_{2k}}{r_1}\right)\\
\vdots & \ddots & \vdots & \vdots & \vdots &\vdots \\
f_p\left(1,\frac{r_1}{r_{2k-2}}\right)&f_p\left(1,\frac{r_2}{r_{2k-2}}\right)&\ldots & \xi_p(1)&f_p\left(1,\frac{\alpha r_{2k-1}}{r_{2k-2}}\right)&f_p\left(1,\frac{\alpha r_{2k}}{r_{2k-2}}\right)\\
f_p\left(1,\frac{r_1}{\alpha r_{2k-1}}\right)&f_p\left(1,\frac{r_2}{\alpha r_{2k-1}}\right)&\ldots & f_p\left(1,\frac{r_{2k-1}}{\alpha r_{2k-1}}\right)&\xi_p(1)&f_p\left(1,\frac{\alpha r_{2k}}{ \alpha r_{2k-1}}\right)\\
f_p\left(1,\frac{r_1}{\alpha r_{2k}}\right)&f_p\left(1,\frac{r_2}{\alpha r_{2k}}\right)&\ldots & f_p\left(1,\frac{r_{2k-2}}{\alpha r_{2k}}\right)&f_p\left(1,\frac{\alpha r_{2k-1}}{\alpha r_{2k}}\right)&\xi_p(1)
\end{array}\right|\label{Determinantegrande}
\end{align*}
we obtain
\begin{align*}
\left|\begin{array}{ccccccc}
\xi(1)&f_p\left(1,\frac{r_2}{r_1}\right)&\ldots & f_p\left(1,\frac{r_{2k-2}}{r_1}\right)&0&0\\
\vdots & \ddots & \vdots & \vdots & \vdots &\vdots \\
f_p\left(1,\frac{r_1}{r_{2k-2}}\right)&f_p\left(1,\frac{r_2}{r_{2k-2}}\right)&\ldots & \xi_p(1)&0&0\\
0&0&\ldots & 0&\xi_p(1)&f_p\left(1,\frac{ r_{2k}}{ r_{2k-1}}\right)\\
0&0&\ldots & 0&f_p\left(1,\frac{ r_{2k-1}}{ r_{2k}}\right)&\xi_p(1)
\end{array}\right|
\end{align*}
By the induction hypothesis,
it is possible to choose radii whose determinant of the first block on the diagonal  is nonzero.  By the discussion in Section \ref{sec:3}, the $2\times 2$ block diagonal has a nonzero determinant.

If $p=N$ the limit results in
\begin{align*}
\left|\begin{array}{ccccccc}
\xi(1)&f_p\left(1,\frac{r_2}{r_1}\right)&\ldots & f_p\left(1,\frac{r_{2k-2}}{r_1}\right)&N&N\\
\vdots & \ddots & \vdots & \vdots & \vdots &\vdots \\
f_p\left(1,\frac{r_1}{r_{2k-2}}\right)&f_p\left(1,\frac{r_2}{r_{2k-2}}\right)&\ldots & \xi_p(1)&N&N\\
0&0&\ldots & 0&\xi(1)&f_p\left(1,\frac{ r_{2k}}{  r_{2k-1}}\right)\\
0&0&\ldots & 0&f_p\left(1,\frac{ r_{2k-1}}{ r_{2k}}\right)&\xi(1)
\end{array}\right|
\end{align*}

Again we concluded that such determinant is different from zero. By continuity if $\alpha$ is sufficient smaller the determinant is different from zero. Equivalently if $r_{2k-1}$ and $r_{2k}$ are sufficiently small the correspondent determinant is non-zero. 

Now we note that the determinant \eqref{DefinicaoDafuncaoGammaIgualAoDet} is invariant if we multiply all radii by the same factor, so we could modify the radii as long as we maintain their ratio constant. 
This proves the theorem.\end{proof}
\begin{remark}
Consider the last theorem, if $p=N-1$ we know that determinant of subsystem it is not given by  expression \eqref{DefinicaoDafuncaoGammaIgualAoDet}, however, the vectors from the correspondent subspace are not real, as mentioned earlier.
\end{remark}


\begin{theorem}
Let $L$ be a odd number. Let $a =2 $ or $a=3$, and consider the equation of central configuration (\ref{EquacaoDeConfiguracaoCentral}) associated to a potential like in (\ref{PotencialQualquer}). 
We can choose $r_1,r_2\ll r_3,r_4 \ll \cdots \ll r_{L-2},r_{L-1}\ll r_L $ in such way 
that there is a relative equilibrium where the configuration assume the shape of $L$ homothetic regular polygons with  these radii, moreover such equilibrium only occurs  if the masses in each polygon are equal.\end{theorem}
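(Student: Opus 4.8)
The plan is to reproduce, for odd $L$, the block–reduction scheme already used to prove Theorem~\ref{EvenPolygons}, but now peeling off the single largest polygon $r_L$ instead of a pair. By the discussion in Section~\ref{sec:3} it suffices to show that the determinant \eqref{DefinicaoDafuncaoGammaIgualAoDet} is nonzero for every $p\neq N-1$: this forces the coefficients $x_i^p$ with $p\in I_N\setminus\{N-1,N\}$ to vanish, while the $p=N$ subsystem \eqref{SubsistemaN} gets a unique real solution giving existence. The exceptional index $p=N-1$ is handled exactly as in the two–polygon and even cases, since any nonzero multiple of $\mathbf{v}_{N-1}$ has non-real coordinates when $N>2$ (Lemma~6 of \cite{13}), so for real masses the $\mathbf{v}_{N-1}$–contribution is forced to be zero. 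Once all $x_i^p$ with $p\neq N$ vanish, the mass vector \eqref{VetorDeMassas} reduces to $\sum_i x_i^N\,\mathbf{e}_i\otimes\mathbf{v}_N=\sum_i x_i^N\,\mathbf{e}_i\otimes\mathbf{1}$, i.e.\ the masses in each polygon are equal, as claimed.

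First I would rescale the first $L-1$ radii by a common factor $\beta$, replacing $r_1,\dots,r_{L-1}$ by $\beta r_1,\dots,\beta r_{L-1}$ while keeping $r_L$ fixed, and let $\beta\to 0^+$; this keeps all mutual ratios inside the block $\{r_1,\dots,r_{L-1}\}$ fixed while sending every ratio between the block and $r_L$ to $0$ or $\infty$. From \eqref{FuncaoF} one has $f_p(1,0)=\sum_{j=1}^{N}\cos\!\left(j\theta p\right)$, which equals $N$ when $p=N$ and $0$ otherwise, and the homogeneity identity $f_p(1,y)=y^{1-a}f_p(1/y,1)$ shows $f_p(1,y)\to 0$ as $y\to\infty$ for $a>1$. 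Hence, for $p\neq N-1,N$, every off-diagonal entry of the last row and last column tends to $0$, and the limiting determinant factors as
\[
\det(B_{L-1})\cdot \xi_p(1),
\]
where $B_{L-1}$ is precisely the matrix \eqref{DefinicaoDafuncaoGammaIgualAoDet} for the $L-1$ radii $r_1,\dots,r_{L-1}$.

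Now $L-1$ is even and the radii $r_1,r_2\ll r_3,r_4\ll\cdots\ll r_{L-2},r_{L-1}$ form $(L-1)/2$ sufficiently separated nested pairs, so Theorem~\ref{EvenPolygons} gives $\det(B_{L-1})\neq 0$ (the direction of the nesting is immaterial for nonvanishing, being a simultaneous row/column permutation). It then remains to check the scalar factor $\xi_p(1)\neq 0$ for $p\neq N-1,N$. This is exactly the single-polygon rigidity statement—a lone regular polygon is a relative equilibrium only with equal masses—established in the Newtonian case $a=3$ in \cite{14,13} and in the vortex case $a=2$ in \cite{10}; this is the only place the restriction $a\in\{2,3\}$ is used, and it is the crux of the argument. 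For $p=N$ the same limit produces a block lower-triangular matrix whose last-row off-diagonal entries tend to $N$ while the last-column entries above the diagonal still vanish, so its determinant equals $\det(B_{L-1})$ in its $p=N$ form (nonzero by Theorem~\ref{EvenPolygons}) times $\xi_N(1)=(2)^{1-a}\sum_{j=1}^{N-1}\sin^{2-a}\!\left(\tfrac{\pi j}{N}\right)>0$, hence is nonzero.

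Finally, since each limiting determinant is nonzero, continuity guarantees that for $\beta$ small enough—equivalently, for $r_L$ sufficiently larger than $r_1,\dots,r_{L-1}$—the actual determinant \eqref{DefinicaoDafuncaoGammaIgualAoDet} is nonzero for every $p\neq N-1$, which is the required conclusion. Invariance of \eqref{DefinicaoDafuncaoGammaIgualAoDet} under scaling all radii by a common factor lets us normalize the configuration freely. The main obstacle is genuinely the diagonal factor $\xi_p(1)$: whereas the off-diagonal functions $f_p$ have a sign we controlled for all $a\geq 2$ in Subsection~\ref{subsec:4.1}, nonvanishing of the single-polygon block is available only through the classical results for $a=2$ and $a=3$, which is why the theorem is stated for those exponents.
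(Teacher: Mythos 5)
Your block--reduction scheme (peeling off the largest polygon $r_L$ and invoking Theorem~\ref{EvenPolygons} for the remaining even block) is the same technique the paper uses, though organized in the opposite direction: the paper inducts upward from the base case $L=1$, adjoining two much smaller polygons at each step, so that the scalar factor $\xi_p(1)$ appears as the base case rather than as the last peeled block. Your limit computations themselves ($f_p(1,y)\to 0$ as $y\to 0^+$ for $p\neq N$, and as $y\to\infty$ for $a>1$; the value $N$ at $p=N$; the block factorization; $\xi_N(1)>0$) are correct. But there is a genuine gap at exactly the step you call the crux: the claim that $\xi_p(1)\neq 0$ for every $p\neq N-1,N$, which you attribute to the single--polygon rigidity results for $a\in\{2,3\}$, is false when $N$ is odd. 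Setting $2p+1=N$, i.e.\ $p=\frac{N-1}{2}$, every summand of \eqref{FuncaoE} contains the factor $\sin\left(\frac{j\theta(2p+1)}{2}\right)=\sin(\pi j)=0$, so $\xi_{\frac{N-1}{2}}(1)=0$ identically, for \emph{every} exponent $a$. This is consistent with the paper's own computation: for $a=2$ it finds $\xi_p(1)=\frac{N-(2p+1)}{2}$, and for $a=3$ it invokes Lemmas~9 and~12 of \cite{13}; in both cases the conclusion is only that $\xi_p(1)\neq 0$ for $p\neq N-1$ \emph{and} $p\neq\frac{N-1}{2}$. Consequently, for odd $N$ your limiting determinant at $p=\frac{N-1}{2}$ is $\det(B_{L-1})\cdot\xi_{\frac{N-1}{2}}(1)=0$, continuity yields nothing, and no determinant argument can force $x_i^{\frac{N-1}{2}}=0$. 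The single--polygon rigidity theorems of \cite{14,13,10} do not assert nonvanishing of this eigenvalue either; they dispose of it by a different mechanism.

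The repair, which is how the paper proceeds (see the remark following its proof), is to treat $p=\frac{N-1}{2}$ as a second exceptional index alongside $p=N-1$ and eliminate it by a reality argument rather than a nonvanishing argument: by Lemma~7 of \cite{13}, a combination $a\mathbf{v}_{N-1}+b\mathbf{v}_{\frac{N-1}{2}}$ has all coordinates real only if $a=b=0$ or $N=3$, so for real masses (or vorticities) the coefficients $x_i^{N-1}$ and $x_i^{\frac{N-1}{2}}$ must vanish jointly, and the conclusion of equal masses per polygon survives. Note that this zero is not an artifact of your particular peeling order: the same factor $\xi_{\frac{N-1}{2}}(1)=0$ sits in the base case of the paper's induction and makes every limiting determinant at this index vanish, so the exceptional treatment is unavoidable whenever $N$ is odd (for even $N$, $\frac{N-1}{2}$ is not an integer and your argument goes through as written). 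With this one additional index handled via Lemma~7 of \cite{13}, the rest of your proposal --- the $\beta\to 0^+$ rescaling, the factorization of \eqref{DefinicaoDafuncaoGammaIgualAoDet}, the $p=N$ case giving existence, and the treatment of $p=N-1$ via Lemma~6 of \cite{13} --- matches the paper's proof.
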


\begin{proof}
The demonstration is analogous to that in the previous theorem. To prove the first case of induction where $L=1$, we analyze the cases where $a=3$ e $a=2$. In this case the determinant corresponds to $\xi_p(r)$. By homogeneity, is sufficient to analyze the case $r=1$.
For $a=3$, note that $\xi_{k-1}(1)$ corresponds to a expression for $\lambda_k$ in the corollary of lemma $9$ in \cite{13}, by lemma $12$ in  \cite{13}, such expression is different from zero for $k\neq N$ and $k\neq \frac{N+1}{2}$.

This we conclude that $\xi_{p}$ is different from zero for $p\neq N-1$ and $p \neq \frac{N-1}{2}.$
For $a=2$, we get:
\begin{align*}
\xi_p(1)&=&\frac{1}{2}\sum_{j=1}^{N-1} \frac{\sin\left(\frac{\pi j (2p+1)}{N}\right)}{\sin\left(\frac{\pi j}{N}\right)}=\frac{1}{2}\sum_{j=1}^{N-1} \frac{e^{\frac{\pi j (2p+1)}{N}\sqrt{-1}}-e^{-\frac{\pi j (2p+1)}{N}\sqrt{-1}}}{e^{\frac{\pi j}{N}\sqrt{-1}}-e^{-\frac{\pi j }{N}\sqrt{-1}}}\\
&=& \frac{1}{2}\sum_{j=1}^{N-1} \sum_{k=-p}^{p}e^{\frac{2\pi jk}{N}\sqrt{-1}}=\frac{1}{2}\sum_{k=-p}^{p}\sum_{j=1}^{N-1}  e^{\frac{2\pi jk}{N}\sqrt{-1}}=\frac{N-(2p+1)}{2}.
\end{align*}
To see the last equality, it is suffices note that $\sum_{j=1}^{N-1}  e^{\frac{2\pi jk}{N}\sqrt{-1}}$ is $N-1$ or $-1$ accordingly $k$ is a multiple of $N$ or not.
This proofs that $\xi_p(1)$ is different from zero for $p \neq \frac{N-1}{2}$.
The theorem follows.
\end{proof}
\begin{remark}
Again, is worth to remember that vector $a\mathbf{v}_{N-1}+b\mathbf{v}_{\frac{N-1}{2}}\in \mathbb{R}^N$ then $N=3$ or $a=b=0$. To see demonstration consult lemma $7$ in \cite{13}.

By the evidences, in the cases with $L \leq 2$ or in the general case for very different radii we are led to believe in the following conjecture.
\end{remark}
\begin{conj}
A central configuration formed by homothetic polygons is only possible if the masses in each polygon are equal.
\end{conj}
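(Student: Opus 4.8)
The plan is to reduce the conjecture, exactly as in Sections \ref{sec:2}--\ref{sec:3}, to a single non-vanishing statement. Decomposing the mass vector as in \eqref{VetorDeMassas}, the coefficients $x_i^p$ with $p\in I_{N-1}$ obey the homogeneous subsystems \eqref{SubsistemaP}. For the index $p=N-1$ the reality argument of \cite{13} (Lemma~$6$) already forces $x_i^{N-1}=0$ for real masses when $N>2$, while the inhomogeneous subsystem \eqref{SubsistemaN} for $p=N$ produces precisely the $\mathbf{v}_N=\mathbf 1$ component, that is, masses constant on each polygon. Hence the whole conjecture is equivalent to the assertion that $\det[\lambda_p(A_{TS})]\neq 0$ for every $p\in\{1,\dots,N-2\}$ and all distinct $r_1,\dots,r_L>0$, so that the remaining coefficients $x_i^p$ vanish. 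Since each such $p$ satisfies $p\neq N-1$, this determinant is the one displayed in \eqref{DefinicaoDafuncaoGammaIgualAoDet}, whose diagonal entries are $\xi_p(1)$ and whose $(T,S)$ off-diagonal entry is $f_p(1,r_S/r_T)$.

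The first concrete step is a sign lemma valid for \emph{all} radii. Order the radii so that $r_1>r_2>\dots>r_L$. Combining the key estimate of Subsection \ref{subsec:4.1}, namely $f_p(x,1)<0$ for $x\in(0,1)$ and every $p$, with the identities $f_{N-p-1}(1,x)=-f_p(x,1)$ and $f_p(1,1/x)=x^{a-1}f_p(x,1)$, one obtains $f_p(1,x)>0$ for $x\in(0,1)$ and $f_p(1,x)<0$ for $x>1$. Consequently the matrix in \eqref{DefinicaoDafuncaoGammaIgualAoDet} has strictly positive entries above the diagonal, strictly negative entries below it, and constant diagonal $\xi_p(1)$. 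For $L=2$ this sign pattern alone yields a positive determinant, recovering Theorem \ref{TwoPolygons}; the point of isolating it is that, unlike the limiting arguments of Section \ref{sec:4}, it holds for comparable radii as well.

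For $L\geq 3$ the sign pattern no longer suffices: expanding the $3\times3$ case one finds the two ``triangle'' products $f_p(1,\tfrac{r_2}{r_1})f_p(1,\tfrac{r_3}{r_2})f_p(1,\tfrac{r_1}{r_3})$ and $f_p(1,\tfrac{r_3}{r_1})f_p(1,\tfrac{r_1}{r_2})f_p(1,\tfrac{r_2}{r_3})$ carry opposite signs, so their magnitudes must be compared. The plan is therefore to upgrade the one-signed power series of Subsection \ref{subsec:4.1} into a two-variable sign-regularity statement for the kernel $(r_T,r_S)\mapsto f_p(r_T,r_S)$: writing each entry through the expansion \eqref{identidadeDaMultiplicacaodeSeries} with the monotone, one-signed coefficients $\alpha_k$, one would control every $2\times2$ minor and, via Cauchy--Binet applied to the resulting factorization, assign a single definite sign to each minor of the off-diagonal kernel. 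Because the entries are positive above and negative below the diagonal, the relevant structure is that of a sign-regular rather than a strictly positive kernel; strict sign regularity of such a kernel forces all its minors, and hence the full determinant, to be nonzero on ordered radii.

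The decisive obstacle is the diagonal. The entries $\xi_p(1)$ are a regularized self-interaction term, not the kernel evaluated on the diagonal (indeed $f_p(r,r)$ diverges), so \eqref{DefinicaoDafuncaoGammaIgualAoDet} is not the matrix of a genuine sign-regular kernel and the classical non-vanishing theorems do not apply verbatim. The heart of the proof will be to show that the regularized diagonal $\xi_p(1)$ is compatible with the off-diagonal sign regularity, for instance by realizing $\xi_p$ as a limit of admissible off-diagonal values, or by deforming continuously from the widely separated configurations of Section \ref{sec:4}, where the determinant is already known to be nonzero, while checking that no zero is crossed as the ratios $r_{T+1}/r_T$ increase toward $1$. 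A secondary difficulty occurs at the indices with $\xi_p(1)=0$ (such as $p=\tfrac{N-1}{2}$ for odd $N$), where non-vanishing must come entirely from the off-diagonal minors; there the sign regularity of the kernel is exactly what is needed. I expect the total-positivity-type estimate for $f_p$, together with this treatment of the diagonal, to be the step that finally closes the general case.
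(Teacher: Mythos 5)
There is a genuine gap, and it is one your proposal itself concedes: the statement you are addressing is stated in the paper as an open \emph{conjecture}, not a theorem, and what you have written is a research program rather than a proof. Your reduction is sound and matches the paper's framework exactly: the decomposition \eqref{VetorDeMassas}, the homogeneous subsystems \eqref{SubsistemaP}, the reality argument from Lemma~$6$ of \cite{13} disposing of $p=N-1$, and the sign facts $f_p(x,1)<0$ on $(0,1)$ from Subsection~\ref{subsec:4.1} together with the identities $f_{N-p-1}(1,x)=-f_p(x,1)$ and $f_p\left(1,\tfrac{1}{x}\right)=x^{a-1}f_p(x,1)$ do yield the sign pattern you describe and recover Theorem~\ref{TwoPolygons}. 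You also correctly identify why $L\geq 3$ is hard (the two triangle products in the $3\times 3$ expansion carry opposite signs, so magnitudes, not signs, must be compared) --- this is precisely where the paper stops, proving the general case only for hierarchically separated radii (Theorem~\ref{EvenPolygons} and its odd-$L$ companion) by a limiting argument, and leaving comparable radii as the conjecture.

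But the two steps that would close the case of comparable radii are asserted, not established. First, the claimed strict sign regularity of the kernel $f_p$: the one-signed power series of Subsection~\ref{subsec:4.1} controls individual entries ($1\times 1$ minors) only; nothing in it bounds $2\times 2$ minors, and the appeal to Cauchy--Binet presupposes a factorization of the kernel through one-signed factors that you never exhibit --- producing such a factorization is a substantial open problem in itself. Second, the diagonal: as you note, $\xi_p(1)$ is a regularized self-interaction, not a limit of kernel values (indeed $f_p(r,r)$ diverges), it can vanish (e.g.\ $p=\tfrac{N-1}{2}$ for $a=2$), and the classical non-vanishing theory for sign-regular kernels does not apply; the alternative deformation from the widely separated radii of Section~\ref{sec:4} requires an a priori reason that no zero of the determinant is crossed as the ratios tend to $1$, which is logically equivalent to what must be proved. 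A smaller point: your claim that the conjecture is \emph{equivalent} to $\det[\lambda_p(A_{TS})]\neq 0$ for all $p\leq N-2$ overstates the reverse implication --- a vanishing determinant for some $p$ would permit kernel vectors, but one would still have to check that real mass vectors with unequal masses actually arise from conjugate pairs $\mathbf{v}_p,\mathbf{v}_{N-p}$; only the sufficiency direction is needed, and only that direction is safe. In short: the reduction and the $L=2$ case are correct and identical in spirit to the paper, but the total-positivity step that you flag as the ``heart of the proof'' is missing, so the conjecture remains unproved by this proposal.
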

\section{The non-planar case}
\label{sec:5}
We can consider, with some adjusts according follows, the case where the polygons do not lie in the same plane. In this case for $a=2$, the equations \eqref{EQUACAODECONFIGURACAOCENTRAL}, no longer represent the vortex dynamics, but we still have interest in the general case, which include this one. 
The positions are given by 
\begin{equation}
q_{(j-1)N+k}=(r_j\omega_k,h_j) \in \mathbb{C}\times\mathbb{R}\label{RelacoesEntrePosicoesCasoNaoPlanar},\mbox{ for } \quad j\in I_L \mbox{ and } k\in I_N.
\end{equation}
where $h_j\in \mathbb{R}$, and $h_j\neq h_i$ if $r_j=r_i$.
In this case, the equation \eqref{EQUACAODECONFIGURACAOCENTRAL}, becomes a  vectorial equation, which is equivalent to two scalar systems, one for each coordinate. Such systems are written in matricial form like
\begin{equation}
 \left[\begin{array}{cccc}
  A_{11}&A_{12}&\ldots&A_{1L}\\
  A_{21}&A_{22}&\ldots&A_{2L}\\
  \vdots&\vdots&\ddots&\vdots\\
  A_{L1}&A_{L2}&\ldots&A_{LL}\\
  \end{array}\right]
   \left[\begin{array}{c}
  \mathbf m_{1}\\
  \mathbf m_{2}\\
    \vdots\\
 \mathbf m_{L}
  \end{array}\right]=\left[\begin{array}{c}
  \nu^2r_1 \mathbf{1}\\
  \nu^2r_2\mathbf{1}\\
    \vdots\\
 \nu^2r_L\mathbf{1}\\
  \end{array}\right]\label{EquacaoMatricialParaOProblemaCasoNaoPlanarA},
  \end{equation}

   and
  \begin{equation}
 \left[\begin{array}{cccc}
  B_{11}&B_{12}&\ldots&B_{1L}\\
  B_{21}&B_{22}&\ldots&B_{2L}\\
  \vdots&\vdots&\ddots&\vdots\\
  B_{L1}&B_{L2}&\ldots&B_{LL}\\
  \end{array}\right]
   \left[\begin{array}{c}
  \mathbf{m}_{1}\\
  \mathbf{m}_{2}\\
    \vdots\\
 \mathbf{m}_{L}
  \end{array}\right]=\left[\begin{array}{c}
  \nu^2h_1 \mathbf{1}\\
  \nu^2h_2\mathbf{1}\\
    \vdots\\
 \nu^2h_L\mathbf{1}\\
  \end{array}\right]
  \label{EquacaoMatricialParaOProblemaCasoNaoPlanarB},
  \end{equation}
  where the matrix $A_{TS}=[a_{kj}]$ has entries
  \begin{equation}
 a_{kj}=\left\lbrace \begin{array}{ccc}
  \dfrac{r_T-r_S\omega_{j-k}}{|(r_T,h_T)-(r_S\omega_{j-k},h_S)|^{a}}+\frac{\nu^2}{M}r_S\omega_{j-k}& \text{for} & k,j\in I_{N}\label{MatrizATSCasoNaoPlanarA};\\
  a_{kk}=\frac{\nu^2}{M}r_T& \text{for} & T=S
  \end{array}\right.
  \end{equation}
And $B_{TS}=[b_{kj}]$
  \begin{equation}
 b_{kj}=  \left\lbrace\begin{array}{ccc}
 \dfrac{h_T-h_S}{|(r_T,h_T)-(r_S\omega_{j-k},h_S)|^{a}}+\frac{\nu^2}{M}h_S& \text{for} & k,j\in I_{N}\label{MatrizATSCasoNaoPlanarB};\\
  b_{kk}=\frac{\nu^2}{M}h_T& \text{for} & T=S.
  \end{array}\right.
  \end{equation}

After the reduction to subsystems, we get similar systems to those in \eqref{SubsistemaP},\,\eqref{SubsistemaN}. However the coefficients now are given by
  \begin{align}
\left\{\begin{array}{ccl}
\lambda_p(A_{TS})&=&\displaystyle \sum_{j=1}^{N} \dfrac{r_T-r_S\omega_{j-1}}{|(r_T,h_T)-(r_S\omega_{j-1},h_S)|^{a}}\omega_{p}^{j-1}+\delta_{p,N-1}r_S\frac{\nu^2}{M}N,\quad \mbox{ if, } T \neq S,\\
\lambda_p(A_{TT})&=&\displaystyle \sum_{j=2}^{N} \dfrac{r_T-r_T\omega_{j-1}}{|(r_T,h_T)-(r_T\omega_{j-1},h_T)|^a}\omega_{p}^{j-1}+\delta_{p,N-1}r_T\frac{\nu^2}{M}N.
\end{array}
\right.\label{WeAreTHeChildren}
\end{align}

And
\begin{align}
\left\{\begin{array}{ccl}
\lambda_p(B_{TS})&=&\displaystyle \sum_{j=1}^{N} \dfrac{h_T-h_S}{|(r_T,h_T)-(r_S\omega_{j-1},h_S)|^{a}}\omega_{p}^{j-1}+\delta_{p,N}h_S\frac{\nu^2}{M}N,\quad \mbox{ if, } T \neq S,\\
\lambda_p(B_{TT})&=&\delta_{p,N}h_T\frac{\nu^2}{M}N.
\end{array}
\right.\label{WeAreTheWorld}
\end{align}
So the masses must satisfy simultaneously both systems whose coefficients are 
$\lambda_p(A_{TS})$ and $\lambda_p(B_{TS})$.

\subsection{The case of two polygons}

It is interesting to note that by Theorem $2$ of \cite{5}, the central configuration with two planar regular polygons in different planes, in fact, exists when $a=3$.
 We show in the sequel that for any value of $a\geq 2$, such configurations only could occur when the masses in each polygon are equal (for $a=3$ this fact is demonstrated in \cite{2015}).

In the case of  two polygons we have $\det[\lambda_p(A_{TS})]$ given explicitly by
\begin{align*}
\left|\begin{array}{cc}
\sum_{j=1}^{N-1} \frac{r_1\cos\left( j\theta p\right)-r_1\cos\left( j\theta(p+1)\right)}{\left(r_1^2-2r_1^2\cos\left( j\theta \right)+r_1^2+(h_1-h_1)^2\right)^{\frac{a}{2}}}&\sum_{j=1}^{N} \frac{r_1\cos\left( j\theta p\right)-r_2\cos\left( j\theta (p+1)\right)}{\left(r_1^2-2r_1r_2\cos\left( j\theta\right)+r_2^2+(h_1-h_2)^2\right)^{\frac{a}{2}}} \\ 
\sum_{j=1}^{N} \frac{r_2\cos\left( j\theta p\right)-r_1\cos\left( j\theta (p+1)\right)}{\left(r_1^2-2r_1r_2\cos\left( j\theta\right)+r_2^2+(h_1-h_2)^2\right)^{\frac{a}{2}}}& \sum_{j=1}^{N-1} \frac{r_2\cos\left( j\theta p\right)-r_2\cos\left( j \theta(p+1)\right)}{\left(r_2^2-2r_2^2\cos\left( j\theta\right)+r_2^2+(h_2-h_2)^2\right)^{\frac{a}{2}}}
\end{array} \right|
\end{align*}

Unlike the planar case, the terms of secondary diagonal in the matrix of coefficients do not always have opposite signs. For some fixed values of $p$, the signs may be either equal or different according change the radii $r_1,r_2$ and the heights $h_1,h_2$. 
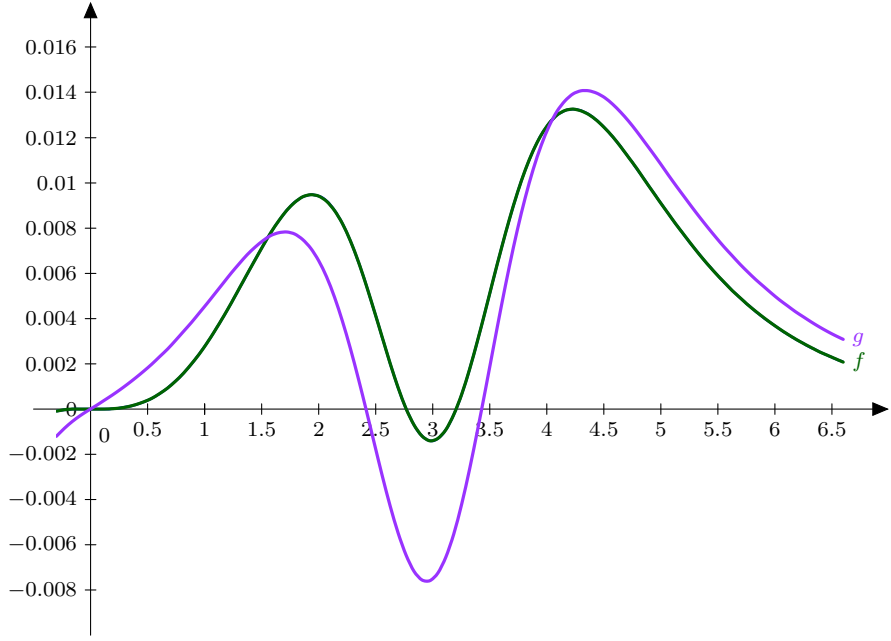
\begin{figure} 
\begin{tikzpicture}[line cap=round,line join=round,>=triangle 45,x=1.5cm,y=300.0cm]
\draw[->,color=black] (-0.5,0.) -- (7.0,0.);
\foreach \x in {,0.5,1,1.5,2,2.5,3,3.5,4,4.5,5,5.5,6,6.5}
\draw[shift={(\x,0)},color=black] (0pt,2pt) -- (0pt,-2pt) node[below] {\footnotesize $\x$};
\draw[->,color=black] (0.,-0.01) -- (0.,0.018);
\foreach \y in {-0.008,-0.006,-0.004,-0.002,0,0.002,0.004,0.006,0.008,0.01,0.012,0.014,0.016}
\draw[shift={(0,\y)},color=black] (2pt,0pt) -- (-2pt,0pt) node[left] {\footnotesize $\y$};
\draw[color=black] (0pt,-10pt) node[right] {\footnotesize $0$};
\clip(-0.3,-0.01) rectangle (8.18,0.016);
\draw[line width=1pt,smooth, color=blue, domain=-0.3:6.6, line cap=butt, samples=400]    plot (\x,{(-(6.0/(193.0/16.0+\x^(2.0))^(3.0/2.0))-(-3.0+\x)/(193.0/16.0-6.0*\x+\x^(2.0))^(3.0/2.0)-(-3.0-\x)/(193.0/16.0+6.0*\x+\x^(2.0))^(3.0/2.0))*((2.0*\x)/(193.0/16.0+\x^(2.0))^(3.0/2.0)+(3.0-\x)/(193.0/16.0-6.0*\x+\x^(2.0))^(3.0/2.0)+(-3.0-\x)/(193.0/16.0+6.0*\x+\x^(2.0))^(3.0/2.0))});
\draw[line width=1.2pt,color=qqwuqq,smooth,samples=100,domain=-0.3:6.6] plot(\x,{(-(6.0/(193.0/16.0+\x^(2.0))^(3.0/2.0))-(-3.0+\x)/(193.0/16.0-6.0*\x+\x^(2.0))^(3.0/2.0)-(-3.0-\x)/(193.0/16.0+6.0*\x+\x^(2.0))^(3.0/2.0))*((2.0*\x)/(193.0/16.0+\x^(2.0))^(3.0/2.0)+(3.0-\x)/(193.0/16.0-6.0*\x+\x^(2.0))^(3.0/2.0)+(-3.0-\x)/(193.0/16.0+6.0*\x+\x^(2.0))^(3.0/2.0))}) node[right] {$f$}; 
\draw[line width=1.2pt,color=zzttff,smooth,samples=100,domain=-0.3:6.6] plot(\x,{(6.0/(193.0/16.0+\x^(2.0))^(3.0/2.0)-(-3.0+\x)/(193.0/16.0-6.0*\x+\x^(2.0))^(3.0/2.0)-(-3.0-\x)/(193.0/16.0+6.0*\x+\x^(2.0))^(3.0/2.0))*(-((2.0*\x)/(193.0/16.0+\x^(2.0))^(3.0/2.0))+(3.0-\x)/(193.0/16.0-6.0*\x+\x^(2.0))^(3.0/2.0)+(-3.0-\x)/(193.0/16.0+6.0*\x+\x^(2.0))^(3.0/2.0))}) node[right] {$g$};
\end{tikzpicture}
\caption{Product in Secondary Diagonal in the Non-planar Case}
\end{figure}
 In the Figure $1$, $f$ and $g$ represents respectively the products $-\lambda_1(A_{12})*\lambda_1(A_{21})$ and $-\lambda_3(A_{12})*\lambda_3(A_{21})$ in function of $r_1$ to $(r_2,h_1,h_2,a,N)=(3,\frac{1}{4},2,3,4)$. 
 So the analysis in this case is much more complicated, and we do not obtain the same results. 
 However, by the existence of other system, the analysis turn out to be more simple. In fact, if $p\neq N$ by equation \eqref{WeAreTheWorld}, we have $\lambda_p(B_{11})=\lambda_p(B_{22})=0,$ so
\begin{align*}
&\det[\lambda_p(B_{TS})]=\left|\begin{array}{cc}
\lambda_p(B_{11})&\lambda_p(B_{12})\\ 
\lambda_p(B_{21})&\lambda_p(B_{22})
\end{array} \right|=\\
&\left|\begin{array}{cc}
 0&\sum_{j=1}^{N} \frac{(h_1-h_2)\cos\left( j\theta p\right)}{\left(r_1^2-2r_1r_2\cos\left( j\theta\right)+r_2^2+(h_1-h_2)^2\right)^{\frac{a}{2}}} \\ 
\sum_{j=1}^{N} \frac{(h_2-h_1)\cos\left( j\theta p\right)}{\left(r_1^2-2r_1r_2\cos\left(j\theta\right)+r_2^2+(h_1-h_2)^2\right)^{\frac{a}{2}}}& 0
\end{array} \right|
\end{align*}

Consider that $h_1\neq h_2$, is sufficient to show that the expression 
\begin{align*}
\sum_{j=1}^{N} \frac{\cos\left( j\theta p\right)}{\left(r_1^2-2r_1r_2\cos\left( j\theta\right)+r_2^2+(h_1-h_2)^2\right)^{\frac{a}{2}}}
=\\
\frac{1}{r_1^a}\sum_{j=1}^{N} \frac{e^{\left( j p\theta\right)\sqrt{-1}}}{\left(1-2\frac{r_2}{r_1}\cos\left( j\theta\right)+\left(\frac{r_2}{r_1}\right)^2+\left(\frac{h_2-h_1}{r_1}\right)^2\right)^{\frac{a}{2}}}
\end{align*} is non-zero. Without loss of generality we can assume that $r_1\geq r_2$. Setting $x=\frac{r_2}{r_1}$ and $k=\sqrt{\frac{|h_2-h_1|}{r_1}}$. This expression becomes
$\frac{1}{r_1^a}\sum_{j=1}^{N} \frac{e^{\left( j \theta p\right)\sqrt{-1}}}{\left(1-2x\cos\left(\theta j\right)+x^2+k\right)^{\frac{a}{2}}}
$,
by \cite{2015} (corollary $2.3$) such expression is always greater than zero. So the determinant is nonzero  which implies that for a
mass vector like \eqref{VetorDeMassas}, the coefficients $x_1^p,x_2^p$ for $p \in I_{N-1}$ are zero. So if the central configuration exists, the masses in each polygon have to be equal.
Now to explore the existence, we analyze the case $p=N$. Using \eqref{WeAreTheWorld} the expression for $\det[\lambda_N(B_{TS})]$ is given by 
\begin{align*}
\left|\begin{array}{cc}
 h_1\frac{\nu^2}{M}N&\psi+h_1\frac{\nu^2}{M}N\\ 
\psi+h_2\frac{\nu^2}{M}N& h_2\frac{\nu^2}{M}N
\end{array} \right|,
\end{align*}
where $\psi=\sum_{j=1}^{N} \frac{(h_2-h_1)}{\left(r_1^2-2r_1r_2\cos\left(\theta j\right)+r_2^2+(h_1-h_2)^2\right)^{\frac{a}{2}}}$.

Without loss of generality, choosing $h_1=0$ this expression reduces to\\
$ h_2^2\cdot\tilde{\psi}\cdot(\tilde{\psi}+\frac{\nu^2}{M}N)$
where $\tilde{\psi}=\sum_{j=1}^{N} \frac{1}{\left(r_1^2-2r_1r_2\cos\left( j\theta\right)+r_2^2+(-h_2)^2\right)^{\frac{a}{2}}}$.
This determinant is obviously positive (provided that $\nu$ is real). This shows that the system have a solution (indeed this is a necessary condition to existence of the central configuration,  but not sufficient, once the system for the coefficients in \eqref{WeAreTHeChildren} has to be satisfied as well).

\subsection{The case of more than two polygons}
The matrix $[\lambda_p(B_{TS})]$ is skew-symmetric if $p\neq N$, because \\$\lambda_p(B_{TS})=\sum_{j=1}^{N} \frac{(h_T-h_S)\cos\left( j\theta p\right)}{\left(r_T^2-2r_Sr_T\cos\left(j\theta \right)+r_S^2+(h_T-h_S)^2\right)^{\frac{a}{2}}}$, therefore its determinant is zero if its order (the number of polygons) is odd.\\
In those cases inevitably we are led to study  the determinants $\det[\lambda_p(A_{TS})]$ that as we have seen, are more difficult
than the planar case, which make the full analysis very complicated for more than two polygons. If the number of polygons is even, is possible to obtain partial results for the determinant $[\lambda_p(B_{TS})]$, keeping fixed the radii and change the heights by repeat the argument of Theorem \ref{EvenPolygons}.

\begin{acknowledgements}
The author would like to thank Eduardo S. Leandro for being the advisor on this work. And to Thiago Dias by the helpful comments.
And the Department of Mathematics at {\it Universidade Federal Rural de Pernambuco} for their assistance.
\end{acknowledgements}



\end{document}